\pgfqpoint{\LineSpace}{\LineSpace}}{\pgfqpoint{\LineSpace}{\LineSpace}}%
\newdimen\LineSpace
\tikzset{
  line space/.code={\LineSpace=#1},
  line space=3pt
}
\DeclareFontFamily{OMX}{lmex}{}
\DeclareFontShape{OMX}{lmex}{m}{n}{<->lmex10}{}
\newcommand{\enstq}[2]{\left\lbrace#1\mathrel{}\middle|\mathrel{}#2\right\rbrace} 
\newcommand{\ensemblenombre}[1]{\mathbb{#1}} 
\newcommand{\N}{\ensemblenombre{N}}
\newcommand{\Z}{\ensemblenombre{Z}}
\newcommand{\R}{\ensemblenombre{R}}
\newcommand{\C}{\ensemblenombre{C}}
\newcommand{\T}{\ensemblenombre{T}}
\newcommand{\Hol}{\operatorname{Hol}}
\newcommand{\Ran}{\operatorname{Ran}}
\newcommand{\dist}{\operatorname{dist}}
\renewcommand{\Re}{\operatorname{Re}}
\renewcommand{\Im}{\operatorname{Im}}
\newcommand{\prodscal}[2]{\left\langle#1,#2\right\rangle} 
\newcommand{\bigint}{\@ifnextchar_\@bigintsub\@bigintnosub}
\def\@bigintsub_#1{\def\@int@subscript{#1}\@ifnextchar^\@bigintsubsup\@bigintsubnosup}
\def\@bigintsubsup^#1{\mathop{\text{\large$\int_{\text{\normalsize$\scriptstyle\@int@subscript$}}^{\text{\normalsize$\scriptstyle#1$}}$}}\nolimits}
\def\@bigintsubnosup{\mathop{\text{\large$\int_{\text{\normalsize$\scriptstyle\@int@subscript$}}$}}\nolimits}
\def\@bigintnosub{\@ifnextchar^\@bigintnosubsup\@bigintnosubnosup}
\def\@bigintnosubsup^#1{\mathop{\text{\large$\int^{\text{\normalsize$\scriptstyle#1$}}$}}\nolimits}
\def\@bigintnosubnosup{\mathop{\text{\large$\int$}}\nolimits}
\newtheorem{thm}{Theorem}
\newtheorem{prop}[thm]{Proposition}
\newtheorem{cor}[thm]{Corollary }
\newtheorem{lem}[thm]{Lemma}
\theoremstyle{definition} 
\newtheorem{rmk}[thm]{Remark}
\theoremstyle{remark} 
\title{Reachable space of the Hermite heat equation with boundary control}
\author{Andreas \bsc{Hartmann}} 
\author{Marcu-Antone \bsc{Orsoni}}
\address{Andreas \bsc{Hartmann}, Université de Bordeaux, CNRS, Bordeaux INP, IMB, UMR 5251, 351 Cours de la Libération, F-33400, Talence, France, andreas.hartmann@math.u-bordeaux.fr}
\address{Marcu-Antone \bsc{Orsoni}, Université de Bordeaux, CNRS, Bordeaux INP, IMB, UMR 5251, 351 Cours de la Libération, F-33400, Talence, France, orsoni.ma@gmail.com}
\date{\today}
\keywords{Boundary control, reachable space, Hermite-Heat equation, Mehler kernel, method of images, holomorphic functions, Bergman space}
\subjclass[2010]{93B03, 35K10, 30H20}
\makeatletter \@removefromreset{figure}{subsection}\makeatother
\makeatletter \@removefromreset{figure}{section}\makeatother
\begin{document}
\maketitle

\begin{abstract}
We discuss reachable states for the Hermite heat equation on a segment with boundary
$L^2$-controls. The Hermite heat equation corresponds to the heat equation to which a quadratic potential is added. We will discuss two situations: when one endpoint of the segment 
is the origin and when the segment is symmetric with respect to the origin. One of the main 
results is that reachable states extend to functions in a Bergman space on a square one diagonal of which is the segment under consideration, and that functions holomorphic in a neighborhood of this square are reachable.
\end{abstract}

\section{Introduction}

An important branch in control theory concerns the identification of reachable states of a given
linear system. In a seminal paper going back to the 70's, Fattorini and Russell \cite{FR1971}
investigated the 
situation for linear parabolic equations in one space dimension with boundary $L^2$-control. 
A special occurrence of such 
equations is the heat equation on a segment which has met a growing interest in recent years.
Let us mention in particular work by Martin, Rosier and Rouchon \cite{MRR2016} which re-launched the
investigation on this problem in 2016 and trying to better understand the role of holomorphic
functions in this setting. Two years later, Dard\'e and Ervedoza \cite{DE18} 
were able to show that functions
holomorphic in a neighborhood of the square $D$ a diagonal of which is the interval on which the
heat equation has to be controlled are reachable 
(previous work already showed that the reachable states
extend holomorphically to that square). In \cite{HKT2020}, the connection with spaces of holomorphic
functions as reachable states was established for the first time conjecturing in particular that
the reachable states should correspond to the Bergman space on $D$. In \cite{Ors2021}, Orsoni 
found that the problem boils down to a so-called separation of singularities problem. 
The final step was achieved in \cite{HO2021} where we solved the separation of singularities problem
thereby establishing the conjecture raised in \cite{HKT2020} as correct: the reachable states of the
1-D heat equation on a segment with boundary $L^2$-controls are given by the Bergman
space on $D$. It should be mentioned that earlier work by Aikawa, Hayashi and Saitoh \cite{AHS1990}
provides a solution to the problem in the case of the half-line. We mention also the paper \cite{KNT2021} which provides a wealth of results on the control of the 1-D heat equation.
\\

It is natural, and in the spirit of the original paper by Fattorini and Russell, to consider  
more general parabolic systems, and in particular to add a potential (this amounts to replace
the Laplace operator by a Sturm-Liouville operator). Clearly, if one expects again holomorphic
functions on $D$ to be reachable,
the least requirement on the potential should be analyticity in $D$.
Very recently, Laurent and Rosier discussed in \cite{LR2020} a result in a rather general setting showing 
that functions holomorphic in a sufficiently big neighborhood of $D$ are reachable.
\\

The aim of this paper is to use techniques from \cite{HO2021} to give a more precise information
in the special case of a quadratic potential. The corresponding equation is also called the
Hermite heat equation. We will consider two cases: a segment with
one endpoint being zero (in this endpoint the equation behaves very much like the classical
heat equation and the corresponding control) and a segment symmetric with respect
to the origin. For the endpoints
different from zero, we get a less precise information (see precise statements below). 
Still it seems fairly reasonnable to expect again the Bergman space on $D$ to characterize the
reachable states also in this situation. A central ingredient in our method is the explicit form of
the kernel --- the Mehler kernel --- for the Hermite heat equation, which in several aspects behaves like the heat kernel. Another central tool comes from the recent discussion in
Strohmaier-Water \cite{SW2020} which gives a general recipe to reachable states in higher
dimension exploring certain symmetry properties of the domains considered.

\subsection{On a segment}
We consider respectively the heat equation and the Hermite heat equation on the segment $[0, \pi]$ with Dirichlet control at both ends which we recall here:
\begin{align}
&\begin{cases}
\label{eq-Heat}
\tag{HE}
\partial_t w  -\partial_x^2w = 0 , \quad t>0, \  x \in (0, \, \pi) \\
w(t, 0)=u_0(t), \qquad w(t, \pi)=u_\pi(t), \quad t>0 \\
w(0, x)=f, \quad x \in (0, \, \pi).
\end{cases}\\
&\begin{cases}
\label{eq-Hermiteheat}
\tag{HHE}
\partial_t w  -\partial_x^2w  +x^2 w = 0 , \quad t>0,\  x \in (0, \, \pi) \\
w(t, 0)=u_0(t), \qquad w(t, \pi)=u_\pi(t), \quad t>0 \\
w(0, x)=f, \quad x \in (0, \, \pi).
\end{cases}
\end{align}

For every control $u=(u_0, \, u_1) \in L^2\left((0, \, \infty), \, \C^2\right)$ and every initial state $f \in X:=W^{-1,2}(0, \pi)$, these equations admit a unique solution $w \in C\big([0, \infty); \, X\big)$ (see e.g. \cite[Theorem 10.8.3]{TW2009}) given respectively by 
$$w(t)=\T_t f + \Phi_t u$$
and 
$$w(t)=\T_t^{\mathrm{H}}f + \Phi^{\mathrm{H}}_t u$$
where $\left(\T_t\right)_{t \ge 0}$ (resp. $\left(\T_t^{\mathrm{H}}\right)_{t \ge 0}$) denotes the Dirichlet Laplacian semigroup (resp. the semigroup generated by the (one dimensional) harmonic oscillator $\mathrm{H}=\partial_x^2 - x^2$) and $\Phi_t$ (resp. $\Phi^{\mathrm{H}}_t$) denotes the \emph{input-to-state map} (see \cite[Proposition 4.2.5]{TW2009}) which is a bounded operator from $L^2\left((0, \, t), \, \C^2\right)$ to $X$. 
Thus, we say that a function $g \in X$ is reachable from $f \in X$ in time $\tau >0$ for \eqref{eq-Heat} (resp. \eqref{eq-Hermiteheat}) if there exists a control $u \in L^2\left((0, \, \tau), \, \C^2\right)$ such that the solution $w$ of \eqref{eq-Heat} (resp. \eqref{eq-Hermiteheat}) satisfies $w(\tau, \cdot)=g$. Due to the smoothing effect of equations \eqref{eq-Heat} and \eqref{eq-Hermiteheat}, it is clear that we cannot reach any non-regular function in $X$. Also, it is well-known that these two equations are null-controllable in any time $\tau>0$, which means that we can reach the null function from any initial condition $f \in X$ and in any time $\tau>0$ (this can be seen for example combining \cite[Proposition 11.5.1]{TW2009} and the proof of \cite[Proposition 11.5.4]{TW2009} made for the heat equation). Clearly, this means that their respective sets of  reachable states do not depend on the initial condition $f \in X$ and are equal respectively to the linear spaces $\Ran \Phi_\tau$ and $\Ran \Phi^{\mathrm{H}}_\tau$. As noticed by Seidman \cite{Seidman1979} (see also \cite[Remark 1.1]{HKT2020}), this also implies that they do not depend on time $\tau >0$ neither. Therefore we will call them \emph{reachable spaces} of equations \eqref{eq-Heat} and \eqref{eq-Hermiteheat}.

As already mentioned, the problem of describing the reachable space for the heat equation \eqref{eq-Heat} has a long history which started with the work of Fattorini and Russell \cite{FR1971}. After a series of developments in the eighties (see for example \cite{Schmidt86} and the references therein), it has then known a renewed interest very recently in \cite{MRR2016}, \cite{DE18}, \cite{HKT2020}, \cite{Ors2021}, \cite{KNT2021} culminating with an exact and definitive characterization in \cite{HO2021}. Solving a problem of separation of singularities, we proved in this last paper that the reachable space of the heat equation \eqref{eq-Heat} is the Bergman space $A^2(D)$ on the square 
$$ D= \enstq{z=x+iy \in \C}{|x-\frac{\pi}{2}| + |y| < \frac{\pi}{2}}.$$
We recall that for a non-negative measurable function $\omega$, 
the Bergman space $A^2(\Omega, \omega)$ on a domain $\Omega$ of the complex plane consists of all the functions $f$ which are holomorphic on $\Omega$ and satisfy 
$$\int_\Omega |f(x+iy)|^2  \omega(x+iy)dx dy < \infty.$$
When $\omega = 1$, we simply write $A^2(\Omega)$. 
\\

The question of describing the reachable space can be generalized to the $n$-dimen\-sional setting considering a bounded open set $O$ instead of a segment. Very recently Strohmaier and Waters have made in \cite{SW2020} a big step forward giving an optimal result about the domain of holomorphy of the reachable states when $O$ is a ball $B:=B(0, \, R)$. More precisely, they proved that the reachable states are holomorphic on the cone given by 
$$\mathcal{E}(B)= \enstq{z=x+iy \in \C^n}{x \in B, \ |y|<\dist(x, \partial B)}$$
and that the functions which are holomorphic on a larger cone $\mathcal{E}(B(0, R'))$ with $R'>R$ are reachable. 
Notice that the domain $\mathcal{E}(B)$ coincides with the square whose one diagonal is $(-R, \, R)$ when $n=1$ (i.e. $B=(-R, \, R)$). \\

In this connection we also mention recent activities on reachable spaces obtained with smooth controls as investigated in \cite{MRR2016} (for control of Gevrey order $2$)  and \cite{KNT2021} (for controls belonging to Sobolev spaces). 
\\

The first aim of this paper is to discuss the case of the Hermite heat equation \eqref{eq-Hermiteheat}. Indeed, some of the tools developed in \cite{HKT2020},\cite{Ors2021}, \cite{HO2021} apply to the case of the heat equation with the potential $x^2$ since the Green kernel is explicitly  known in this case. The description of the reachable space for the Hermite heat equation has been initiated in \cite{LR2020} where the authors proved that the functions which are holomorphic on a certain disk are reachable for the equation \eqref{eq-Hermiteheat}. Actually, their result holds for a general case of semilinear heat equations with very smooth controls (Gevrey of order $2$). When we restrict our attention on the specific equation \eqref{eq-Hermiteheat}, we can give a much more precise description. 
The first result is the same right inclusion as in \cite{HO2021}.
\begin{thm}
\label{thm-reach-segment-rightinclusion}
We have $\Ran(\Phi^{\mathrm{H}}_\tau) \subset A^2(D)$.
\end{thm}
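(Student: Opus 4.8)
The plan is to transport the strategy of the heat-equation case \cite{HO2021} to the present setting, the Gaussian heat kernel being replaced by the Mehler kernel
$$M_t(x,y)=\frac{1}{\sqrt{2\pi\sinh 2t}}\exp\!\left(-\frac{(x^2+y^2)\cosh 2t-2xy}{2\sinh 2t}\right),$$
the explicit kernel of the harmonic-oscillator semigroup $\T^{\mathrm H}_t$. Fix a control $u=(u_0,u_\pi)$ and set $g=\Phi^{\mathrm H}_\tau u$. The first step is to write $g$ through a parabolic Poisson-type representation
$$g(x)=\int_0^\tau\Big[\partial_y p^{\mathrm D}_t(x,y)\big|_{y=0}\,u_0(\tau-t)-\partial_y p^{\mathrm D}_t(x,y)\big|_{y=\pi}\,u_\pi(\tau-t)\Big]\,dt,$$
where $p^{\mathrm D}_t$ is the Dirichlet--Mehler kernel on $[0,\pi]$; near the endpoint $0$ this kernel is produced by the method of images, since the potential $x^2$ is invariant under $x\mapsto-x$.

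The second step is the holomorphic extension. For fixed $t>0$ the kernel $M_t(\cdot,y)$ is a Gaussian, hence entire in the space variable, and this persists after differentiating in $y$ and summing the rapidly convergent image series. Writing $z=x+iy$ and using $\Re(z^2)=x^2-y^2$, the building block at the endpoint $0$ has modulus comparable to $|z|\,(\sinh 2t)^{-3/2}\exp(-\Re(z^2)\coth(2t)/2)$, so the $t$-integral converges locally uniformly exactly on the sector $\Re(z^2)>0$, i.e. the $45^\circ$ cone with vertex $0$. The symmetric computation at $\pi$ gives the cone with vertex $\pi$, and the intersection of the two cones is precisely the square $D$; hence $g$ continues holomorphically to $D$.

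The heart of the matter, and the step I expect to be the main obstacle, is the quantitative bound $\|g\|_{A^2(D)}\lesssim\|u\|_{L^2}$. The naive route through Minkowski's inequality would require $\int_0^\tau\|\partial_y p^{\mathrm D}_t(\cdot,0)\|^2_{A^2(D)}\,dt<\infty$, but this fails: on the slanted edges of $D$ one has $\Re(z^2)=0$, the kernel loses its Gaussian decay, and a short computation shows the time integral diverges, so the operator $u\mapsto g$ is \emph{not} Hilbert--Schmidt. The point is that $g$ is nevertheless square-integrable because of the oscillation of $\exp(-z^2\coth(2t)/2)$ in $t$, which must be retained rather than discarded by a triangle inequality. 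I would capture it either through a weighted Schur test or, more transparently, through the conformal change of variables $\zeta=z^2$, which maps the cone with vertex $0$ onto the right half-plane and turns the inner integral into a Laplace-transform-type operator in $\zeta$; its $L^2$ boundedness on the half-plane yields the desired estimate, and is in essence the half-line analysis of \cite{AHS1990}.

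It remains to check that passing from the heat kernel to the Mehler kernel costs nothing. Since $A^2(D)$-membership is dictated by the behaviour of $g$ near $\partial D$, which is governed by the short-time regime $t\to0$, and since $\sinh 2t\sim 2t$ and $\cosh 2t\to1$ there, the Mehler kernel factors as the heat kernel (with the harmless time reparametrisation $2\sinh 2t$) times a correction tending to $1$; this correction is bounded on $D$ and does not affect integrability, so the estimate transfers. The genuinely delicate endpoint is $\pi$, where the potential is not symmetric and $p^{\mathrm D}_t$ is not given by an exact method of images. For the present upper bound, however, it suffices to dominate the short-time singular part of $p^{\mathrm D}_t$ near $\pi$ by the reflected free Mehler kernel, which still produces the cone with vertex $\pi$ and therefore the inclusion in $A^2(D)$.
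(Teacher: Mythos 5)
Your skeleton coincides with the paper's: decompose the solution via the method of images into the two endpoint (half-line) contributions plus far-image remainders, reduce the vertex-$0$ piece to the classical half-line heat operator, and observe that a Minkowski/Hilbert--Schmidt bound fails on the slanted edges of $D$ where $\Re(z^2)=0$. Your transfer from the Mehler to the heat kernel at the vertex $0$ is exactly the paper's Theorem \ref{thm-reach-half-line}: the substitution $x=\tanh(2(\tau-s))/2$ gives the \emph{exact} identity $\Phi^{\mathrm{H}}_{\tau,0}u_0=\Phi_{T,0}\widetilde{u_0}$ with $\|\widetilde{u_0}\|_2\asymp\|u_0\|_2$ (the correction there depends on $s$ alone, so it only reparametrizes the control), and the far images are indeed handled by crude estimates as in the paper (with a small case distinction for the image at $2\pi$).

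The genuine gap is at the endpoint $\pi$, and it is fatal as written. Your proposed fix --- ``dominate the short-time singular part of $p^{\mathrm D}_t$ near $\pi$ by the reflected free Mehler kernel'' --- is a pointwise modulus bound, i.e.\ exactly the triangle-inequality move you yourself correctly ruled out: such domination yields holomorphy on the cone with vertex $\pi$, but cannot yield the $A^2(D)$ estimate, since near the edges $\Re((\pi-z)^2)=0$ the dominating kernel has no decay and the relevant time integral diverges (your own non-Hilbert--Schmidt computation, transported to the vertex $\pi$). The same objection applies to the claim that the Mehler/heat correction ``is bounded on $D$ and does not affect integrability'': completing the square gives $-\coth(2(\tau-s))\frac{z^2+\pi^2}{2}+\frac{\pi z}{\sinh(2(\tau-s))}=-\frac{\coth(2(\tau-s))}{2}(\pi-z)^2+\psi(s)\pi z$ with $\psi(s)=\frac{1-\cosh(2(\tau-s))}{\sinh(2(\tau-s))}$, so at this endpoint the correction $e^{\psi(s)\pi z}$ depends \emph{jointly} on $s$ and $z$; a factor that is merely bounded in modulus cannot be pulled out of an operator whose $L^2\to A^2$ boundedness rests on cancellation. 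The missing idea is the paper's Lemma \ref{lemma8}: expand $e^{\psi(s)\pi z}=\sum_{n\ge 0}\psi(s)^n(\pi z)^n/n!$, which separates the variables at the price of a series; the $n$-th term splits into a \emph{reflected half-line operator} $\left[\Phi^{\mathrm{H}}_{\tau,0}(\psi^n u_\pi)\right](\pi-z)$, bounded into $A^2(D)$ by the half-line result (your step with the map $\zeta=z^2$, the paper's Corollary \ref{Cor7}) with norm $\lesssim\|\psi\|_\infty^n\|u_\pi\|_2$, plus a remainder carrying the extra factor $\psi(s)^{n+1}$, which vanishes at $s=\tau$ and hence tames the short-time singularity, so that \emph{there} a crude Cauchy--Schwarz bound does suffice; summing over $n$ against $1/n!$ closes the estimate. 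Without this device (or some substitute that retains the oscillation at $\pi$), your argument proves holomorphic extension of the reachable states to $D$ but not their membership in $A^2(D)$, which is the content of the theorem.
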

While we were not able to prove equality, we have the following almost optimal result. For $\varepsilon >0$, denote by $D^{(\varepsilon)}$ the open square whose one diagonal is $(0, \, \pi+\varepsilon)$:
$$
D^{(\varepsilon)}=\{z=x+iy\in \C: |x-\frac{\pi+\varepsilon}{2}| + |y|<\frac{\pi+\varepsilon}{2}\}.
$$
\begin{thm}
\label{thm-reachable-segment}
For every $\varepsilon >0$, we have $A^2(D^{(\varepsilon)}) \subset \Ran(\Phi^{\mathrm{H}}_\tau)$. 
\end{thm}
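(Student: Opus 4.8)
The plan is to reduce the inclusion $A^2(D^{(\varepsilon)}) \subset \operatorname{Ran}(\Phi^{\mathrm{H}}_\tau)$ to the already-known characterization for the classical heat equation, by exploiting the relationship between the Mehler kernel (the Green kernel of \eqref{eq-Hermiteheat}) and the Gaussian heat kernel. The starting observation is that the input-to-state map $\Phi^{\mathrm{H}}_\tau$ is, by the variation-of-constants formula, an integral operator against the boundary-trace of the Mehler kernel on $[0,\pi]$, just as $\Phi_\tau$ integrates against the boundary-trace of the heat kernel. Since the reachable space for \eqref{eq-Heat} is known from \cite{HO2021} to be exactly $A^2(D)$, the goal is to show that the Hermite semigroup produces essentially the same reachable states, up to a controlled loss encoded by the enlargement $D \subset D^{(\varepsilon)}$.

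\medskip

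First I would write the Mehler kernel explicitly and factor it as a Gaussian heat-type kernel multiplied by correcting exponential factors that are holomorphic and non-vanishing on a neighborhood of $\overline{D}$. Concretely, the Mehler kernel at time $\tau$ has the form of a Gaussian in the rescaled variables with an extra factor of the shape $e^{-\alpha(x^2+\xi^2)}$ for a smooth, positive $\alpha$ depending on $\tau$, together with a normalizing prefactor. The key structural point is that this correcting factor is a holomorphic multiplier: multiplication by $e^{-\alpha z^2}$ (and its reciprocal) maps the Bergman space $A^2$ on any bounded square boundedly and invertibly into itself, since the multiplier and its inverse are bounded above and below on $\overline{D^{(\varepsilon)}}$. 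Thus, morally, $g$ is reachable for \eqref{eq-Hermiteheat} if and only if a suitably modified function $\tilde g = e^{\alpha z^2} g$ (or a comparable transform) is reachable for the classical heat equation.

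\medskip

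Next I would make this precise at the level of the control. Given a target $g \in A^2(D^{(\varepsilon)})$, I would first use the slightly larger square $D^{(\varepsilon)}$ to gain room: restricting $g$ to $D$ loses nothing, but the $\varepsilon$-enlargement lets me absorb the discrepancy between the true Mehler time-evolution and the idealized heat evolution, which is exactly the mechanism by which a strictly bigger square is needed when the potential is present. I would then invoke the heat-equation characterization of \cite{HO2021} to produce, for the transformed target, a control $u \in L^2\left((0,\tau),\C^2\right)$ reaching it for \eqref{eq-Heat}, and transfer this control back to \eqref{eq-Hermiteheat} by conjugating with the holomorphic multiplier, checking that the conjugation preserves the $L^2$-class of the boundary data. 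The method of images and the symmetrization recipe of Strohmaier--Waters \cite{SW2020} would be used to handle the behaviour at the endpoint $x=\pi$, where the potential $x^2$ is nonzero and the Hermite kernel genuinely differs from the heat kernel.

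\medskip

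The hard part will be controlling precisely the endpoint $x=\pi$, where the quadratic potential makes the Mehler kernel deviate from the heat kernel in a way that cannot be undone by a single global holomorphic multiplier. This is the source of the loss of optimality --- equality $A^2(D)=\operatorname{Ran}(\Phi^{\mathrm H}_\tau)$ is not claimed --- and it is exactly why the statement is phrased with the enlarged square $D^{(\varepsilon)}$ for every $\varepsilon>0$ rather than with $D$ itself. Concretely, I expect the estimates on the correcting factor near $z=\pi$ to require the extra margin: the multiplier comparison that is uniform on $\overline{D}$ degrades near the far corner, and the freedom to enlarge the square by $\varepsilon$ is what restores a uniform two-sided bound. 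Quantifying this degradation, and verifying that the transferred control still lies in $L^2$ after conjugation, is where the bulk of the technical work will lie.
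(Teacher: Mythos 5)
There is a genuine gap, and it sits at the heart of your mechanism. Writing out the Mehler kernel \eqref{eq-Mehler-kernel} as ``heat kernel times correction'' gives
\begin{equation*}
K(t,x,y)=\frac{1}{\sqrt{2\pi\sinh(2t)}}\;e^{-\frac{(x-y)^2}{2\sinh(2t)}}\;e^{-\frac{\tanh(t)}{2}\,(x^2+y^2)},
\end{equation*}
so the correcting factor is \emph{time-dependent}: the exponent $\tanh(t)(x^2+y^2)/2$ involves the same time variable that is integrated against the control in the Duhamel formula $\int_0^\tau \frac{\partial K}{\partial y}(\tau-s,x,\cdot)\,u(s)\,ds$. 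Consequently there is no fixed holomorphic multiplier $m$ with $\Phi^{\mathrm{H}}_\tau u = m\cdot \Phi_\tau(\widetilde u\,)$, and the claimed equivalence ``$g$ reachable for \eqref{eq-Hermiteheat} iff $e^{\alpha z^2}g$ reachable for \eqref{eq-Heat}'' has no justification. The \emph{exact} reduction of the Hermite heat flow to the heat flow (the lens/Mehler transform, visible in the factorization \eqref{K-heat} used in Lemma \ref{lemma-SW}) also rescales space by $1/\cosh(2t)$: the endpoint $0$ is fixed by this dilation --- which is precisely why the clean identity $\Ran \Phi^{\mathrm{H}}_{\tau,0}=\Ran\Phi_{T,0}$ of Theorem \ref{thm-reach-half-line} holds --- but the endpoint $\pi$ becomes a \emph{moving} boundary $\pi/\cosh(2t)$, so no conjugation reduces the control at $x=\pi$ to a fixed-domain heat problem, and the characterization $\Ran\Phi_\tau=A^2(D)$ of \cite{HO2021} cannot be invoked there. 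You acknowledge this obstruction but then defer it to ``method of images plus Strohmaier--Waters''; that cannot close the gap, because the Strohmaier--Waters argument only produces reachability of functions holomorphic in a \emph{neighborhood} of a closed square (this is exactly Proposition \ref{prop-reach-segment-centered}), never of Bergman functions with boundary singularities, so your two ingredients cannot combine to give all of $A^2(D^{(\varepsilon)})$.

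The missing idea is the separation of singularities, which is the actual engine of the paper's proof. One writes $D^{(\varepsilon)}=\Delta\cap(\pi+\varepsilon-\Delta)$ and uses \cite[Corollary 1.6]{HO2021} to split $A^2(D^{(\varepsilon)})=A^2(\Delta)+A^2(\pi+\varepsilon-\Delta)$. The first summand carries the (sharp) singularities at the corner $0$ and is handled exactly: $A^2(\Delta)=\Ran(\Phi_{T,0})\oplus X$ by \eqref{eq-inclusion-heat-halfline}, and $\Ran\Phi_{T,0}=\Ran\Phi^{\mathrm{H}}_{\tau,0}$ by Theorem \ref{thm-reach-half-line}. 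The second summand (together with the entire functions $X$) consists of functions holomorphic on $D_{\pi+\varepsilon}$, which Proposition \ref{prop-reach-segment-centered} makes reachable for the symmetric-interval problem on $(-\pi,\pi)$; finally Lemma \ref{lemma-reachable-subset} restricts both reachable spaces into $\Ran\Phi^{\mathrm{H}}_\tau$. Note also that your diagnosis of where the $\varepsilon$ is lost is off: functions of $A^2(D^{(\varepsilon)})$ are automatically holomorphic near $\pi$ (an interior point of $D^{(\varepsilon)}$), so nothing ``degrades at the far corner''; the loss comes from Proposition \ref{prop-reach-segment-centered} requiring holomorphy on a strictly larger square $D_{L'}$, $L'>L$ (room for the cut-off in the rotation argument), which forces the vertex of the second sector to sit at $\pi+\varepsilon$ rather than at $\pi$.
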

Notice that the membership of the reachable states to the Bergman space has no interest for the right-hand sides of the square since $\varepsilon$ is arbitrary, but is sharp for the left-hand sides. \\

Finally, since the heat equation \eqref{eq-Heat} has constant coefficients, it is invariant by dilation and translation. Hence the reachable space of the heat equation on a general segment can be trivially deduced from the one of \eqref{eq-Heat}. The situation is completely different for the Hermite heat equation since the potential $x^2$ is space-dependent. For this reason we will also consider this equation on the segment $[-L, L]$ for $L>0$. 
\begin{equation}
\label{eq-Hermiteheat2}
\tag{HHE2}
\begin{cases}
\partial_t w  -\partial_x^2w + x^2 w = 0 , \quad t\in (0, \tau),\  x \in (-L, \, L) \\
w(t, -L)=u_{-L}(t), \qquad w(t, L)=u_L(t), \quad t\in (0, \tau) \\
w(0, x)=f, \quad x \in (-L, \, L).
\end{cases}
\end{equation}
Notice that this equation can be viewed as an equation on $[0, \pi]$ with a change of potential. Denoting by $D_L$ the square whose one diagonal is $(-L, L)$:
\begin{equation}\label{DL}
 D_L=\{z=x+iy\in \C: |x| + |y| < L\},
\end{equation}
and $\Phi^{\mathrm{H}}_{\tau, (-L, L)}$ the corresponding input-to-state map, we obtain the following result.
\begin{prop}
\label{prop-reach-segment-centered} 
Let $L>0$. For every $L'>L$, we have 
$$\Hol(D_{L'}) \subset \Ran\Phi^{\mathrm{H}}_{\tau, (-L, L)} \subset A^2(D_L).$$\end{prop}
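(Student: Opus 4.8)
The plan is to exploit the two symmetries that characterise \eqref{eq-Hermiteheat2}: the potential $x^2$ is even and the segment $[-L,L]$ is invariant under the reflection $x\mapsto -x$, so the generator $\mathrm{H}=\partial_x^2-x^2$ commutes with this reflection. This is the one-dimensional instance of the symmetry mechanism of Strohmaier--Waters \cite{SW2020}. First I would show that the reflection splits the control problem. Decomposing a control $u=(u_{-L},u_L)$ into its symmetric part $\tfrac12(u_{-L}+u_L)(1,1)$ and its antisymmetric part $\tfrac12\big(u_{-L}-u_L,\,u_L-u_{-L}\big)$, and using that $v(t,x):=w(t,-x)$ solves \eqref{eq-Hermiteheat2} with the two controls swapped, one checks --- taking $f=0$, which is harmless since the reachable space does not depend on the initial datum --- that
\[
\Ran\Phi^{\mathrm{H}}_{\tau,(-L,L)}=\mathcal R_{\mathrm{ev}}\oplus\mathcal R_{\mathrm{odd}},
\]
where $\mathcal R_{\mathrm{ev}}$ (resp.\ $\mathcal R_{\mathrm{odd}}$) is the set of reachable states that are even (resp.\ odd) in $x$. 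An even solution satisfies $\partial_x w(t,0)=0$ and an odd one $w(t,0)=0$; restricting to $[0,L]$ therefore identifies $\mathcal R_{\mathrm{ev}}$ and $\mathcal R_{\mathrm{odd}}$ with the reachable spaces of the Hermite heat equation on $[0,L]$, with Dirichlet control at $L$ and a Neumann, resp.\ Dirichlet, condition at the origin.

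On $[0,L]$ the origin is again the ``good'' endpoint and $L$ the ``bad'' one, which is exactly the configuration of \eqref{eq-Hermiteheat} on $[0,\pi]$; after the affine change $\xi=\tfrac{\pi}{L}x$ and the matching rescaling of time the equation becomes a Hermite heat equation with quadratic potential of a different constant, to which the Mehler kernel arguments of Theorem~\ref{thm-reach-segment-rightinclusion} apply, since they only use the explicit Gaussian form of the kernel and not the precise constant. For the right inclusion this gives $\mathcal R_{\mathrm{odd}}\subset A^2(D_L)$, and $\mathcal R_{\mathrm{ev}}\subset A^2(D_L)$ once one checks that the even ($+$) reflection in the method of images yields the same kernel bounds as the odd ($-$) one. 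As $D_L$ is invariant under $z\mapsto -z$, every $A^2(D_L)$-function splits into an even and an odd summand both still in $A^2(D_L)$, so these recombine into $\Ran\Phi^{\mathrm{H}}_{\tau,(-L,L)}\subset A^2(D_L)$. Alternatively the right inclusion can be proved directly on $[-L,L]$, writing a reachable state as an integral of $u$ against the $x$-derivative of the Mehler kernel at $\pm L$, extending it holomorphically to $z\in D_L$ and bounding its $A^2(D_L)$-norm as in \cite{HO2021}.

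For the left inclusion I would take $g\in\Hol(D_{L'})$ with $L'>L$ and use the invariance of $D_{L'}$ under $z\mapsto -z$ to write $g=g_{\mathrm{ev}}+g_{\mathrm{odd}}$, with $g_{\mathrm{ev}}(z)=\tfrac12(g(z)+g(-z))$ and $g_{\mathrm{odd}}(z)=\tfrac12(g(z)-g(-z))$ both holomorphic on $D_{L'}$. Restricted to $[0,L]$, the even summand automatically meets the Neumann condition at $0$ and the odd one the Dirichlet condition, and each is holomorphic on a neighbourhood of the closed square with diagonal $[0,L]$ \emph{enlarged past} $L$ (because $L'>L$). The reachability construction underlying Theorem~\ref{thm-reachable-segment} then produces $L^2$-controls at $L$ reaching each summand on $[0,L]$; re-symmetrising these controls reaches $g$ on $[-L,L]$, which gives $\Hol(D_{L'})\subset\Ran\Phi^{\mathrm{H}}_{\tau,(-L,L)}$. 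The statement is only $\Hol(D_{L'})$, rather than the sharper $A^2$-type inclusion of Theorem~\ref{thm-reachable-segment}, precisely because here \emph{both} endpoints $\pm L$ lie away from the origin, so there is a loss at each of them.

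The step I expect to be the main obstacle is adapting the reachability (left) inclusion to the \emph{Neumann} condition at the origin, needed for the even part: Theorem~\ref{thm-reachable-segment} is stated for Dirichlet data, and while the method of images suggests that passing from the odd to the even reflection merely changes a sign in the kernel, one must verify that the separation-of-singularities construction and the accompanying holomorphic-extension estimates survive this change, and that the symmetrised controls at $\pm L$ produce a state holomorphic across the origin on all of $D_L$. A secondary point is to confirm that the change in the coefficient of the quadratic potential, forced by the rescaling $[0,L]\to[0,\pi]$, leaves every Mehler kernel estimate intact.
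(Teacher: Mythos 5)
Your right inclusion is essentially fine and close to what the paper does: the paper applies the method of images directly to the symmetric interval (odd, $4\pi$-periodic extension), obtains a solution formula with Mehler-kernel derivatives at the points $(4k\pm1)\pi$, treats the $k=0$ terms exactly as $\Phi^{\mathrm{H}}_{\tau,\pi}$ in Lemma \ref{lemma8}, and observes that the remaining terms are very regular. Your detour through the even/odd splitting and a Neumann problem at the origin is an unnecessary complication (and the Neumann half-line problem is nowhere analysed in the paper), but it is not the fatal issue.

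The fatal issue is in the left inclusion, precisely at the step you yourself flag as the main obstacle. After your splitting, you must reach on $[0,L]$ an arbitrary function holomorphic near the closed square using an $L^2$ control at the endpoint $L$ alone, and for this you invoke ``the reachability construction underlying Theorem \ref{thm-reachable-segment}''. This is circular: in the paper, Theorem \ref{thm-reachable-segment} is \emph{deduced from} Proposition \ref{prop-reach-segment-centered} (its proof uses $\Hol(D_{\pi+\varepsilon})\subset\Ran\Phi^{\mathrm{H}}_{\tau,(-\pi,\pi)}$), and there is no independent mechanism, in the paper or in your proposal, that produces a control at an endpoint located away from the origin --- the paper explicitly says that the range of $\Phi^{\mathrm{H}}_{\tau,\pi}$ ``awaits further investigation''. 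The reflection $x\mapsto-x$ you borrow from Strohmaier--Waters only decomposes the problem; it constructs nothing. The symmetry the paper actually exploits is the invariance of the square under rotation by $\pm\pi/2$: given $g\in\Hol(D_{L'})$, one cuts off and solves the Hermite heat equation \emph{forward} on all of $\R$ with initial datum $w_0(x)=\eta(x)g(ix)$, proves (Lemma \ref{lemma-SW}) that $w(t,\cdot)\to w_0$ uniformly on compact subsets of $D_L$ as $t\to0$, extends $w$ holomorphically in the space variable, and sets $v(t,z)=w(t,-iz)$; since the substitution $z\mapsto-iz$ flips the signs of both $\partial_x^2$ and $z^2$, the function $v$ solves the \emph{backward} Hermite heat equation with $v(0,\cdot)=g$, so that $\tilde v(t,\cdot)=v(\tau-t,\cdot)$ is a forward solution on $(-L,L)$ equal to $g$ at time $\tau$, whose traces $w(\tau-t,\pm iL)$ at $x=\mp L$ are the desired controls (both produced simultaneously, with no need for any even/odd splitting). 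Without this rotation-to-backward-equation mechanism, or a genuine substitute for it, your left inclusion cannot be completed.
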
 
This result is not as sharp as the one obtained in Theorem \ref{thm-reachable-segment}, but it will be one ingredient of it. The left inclusion is the same as that given by Dardé and Ervedoza in \cite[Theorem 1.1]{DE18} for the classical 1-D heat equation.
Our proof is an adaptation to the Hermite heat equation of a very recent and clever argument of Strohmaier and Waters developed in \cite[Theorem 2]{SW2020} which generalizes the Dardé-Ervedoza result to the $n$-dimensional setting.

\subsection{On the half-line}
Here, we consider also the two previous equations but on the half-line. 
\begin{align}
&\begin{cases}
\label{eq-Heat-halfline}
\tag{HE-hl}
\partial_t w  -\partial_x^2 w = 0 , \quad t>0,\  x \in (0, \, \infty) \\
w(t, 0)=u_0(t) , \quad t>0 \\
w(0, x)=f, \quad x\in (0, \infty).
\end{cases}\\
&\begin{cases}
\label{eq-Hermiteheat-halfline}
\tag{HHE-hl}
\partial_t w  -\partial_x^2 w  +x^2 w = 0 , \quad t>0,\  x \in (0, \, \infty) \\
w(t, 0)=u_0(t) , \quad t>0 \\
w(0, x)=f, \quad x\in (0, \infty).
\end{cases}
\end{align}

As equations \eqref{eq-Heat} and \eqref{eq-Hermiteheat}, the two previous equations define well-posed linear time-invariant control systems: for every $f \in X=W^{-1,2}(0, \infty)$ and every $u_0 \in L^2\left((0, \infty); \, \C \right)$, each of these equations admits a unique solution $w \in C\left([0, \infty); W^{-1,2}(0, \infty)\right)$. 
We denote by $(\T_{t, 0})_{t \geq 0}$ (resp. $\left(\T_{t, 0}^{\mathrm{H}}\right)_{t \geq 0}$) the Dirichlet semigroup generated respectively by the (one-dimensional) Laplacian (resp. the harmonic oscillator); and by $\Phi_{\tau, 0}$ (resp. $\Phi^{\mathrm{H}}_{\tau, 0}$) the corresponding input-to-state map. 

However, unlike equations \eqref{eq-Heat} and \eqref{eq-Hermiteheat} the latter equations \eqref{eq-Heat-halfline} and \eqref{eq-Hermiteheat-halfline} are not null-controllable. 
Worse, as proved in \cite[Theorem 1.2 and Corollary 1.4] {DE19}, it turns out that the lack of null-controllability is maximal: there is no non trivial initial condition $f \in X$ which can be steered to zero, and this in whatever time. In other words, for any $0\neq f \in X$ and any $\tau >0$, we have $\T_{t, 0} f \notin \Ran \Phi_{\tau, 0}$ (resp. $\T_{t, 0}^{\mathrm{H}} f \notin \Ran \Phi^{\mathrm{H}}_{\tau, 0}$). 
This means that the reachable space of equation \eqref{eq-Heat-halfline} (resp. \eqref{eq-Hermiteheat-halfline}), given by $\T_{t, 0} f + \Ran \Phi_{\tau, 0}$ (resp.  $\T_{t, 0}^{\mathrm{H}} f + \Ran\Phi^{\mathrm{H}}_{\tau, 0}$) is an affine space which is linear only for $f=0$. In this last case, this space is called \emph{null-reachable space}. 
Notice that now it also depends on time.\\

The characterization of the null-reachable space of \eqref{eq-Heat-halfline} has been discussed in several papers. In \cite{AHS1990}, the authors proved using reproducing kernels arguments that 
$$ \Phi_{\tau, 0}\left[L^2((0, \tau); dt/t)\right]=A^2(\Delta, \, \omega_{0, \tau})$$
where $\Delta$ denotes the right-angle sector 
$$\Delta=\enstq{z \in \C}{|\arg(z)|< \frac{\pi}{4}}$$ 
and $\omega_{0, \tau}$ denotes the exponential weight
$$\omega_{0, \tau}(z)= \frac{1}{\tau} e^{\frac{\Re(z^2)}{2\tau}}.$$
Conversely, the second author proved in \cite{Ors2021} that 
\begin{equation}
\label{eq-inclusion-heat-halfline}
A^2(\Delta)= \Ran(\Phi_{\tau, 0}) \oplus X
\end{equation}
where $X$ is a suitable space of entire functions.
Thus, combining the above two results we have 
$$A^2(\Delta, \, \omega_{0, \tau}) \subset \Ran(\Phi_{\tau, 0}) \subset A^2(\Delta).$$
Actually, an exact characterization of the null-reachable $\Ran(\Phi_{\tau, 0})$ has been given in \cite{Saitoh91}. In order to state it, let us define another space of analytic functions. Let $\Omega$ be a simply connected domain in the complex plane with at least two boundary points. For a non negative measurable function $\omega$, we say that an holomorphic function $f$ belongs to the Hardy-Smirnov space (also called Szeg\"o space) $E^2(\Omega, \omega)$ if there exists a sequence of rectifiable Jordan curves $(\gamma_n)_{n \in \N}$ eventually surrounding each compact subdomain of $\Omega$ such that 
$$\sup_n \int_{\gamma_n} |f(z)|^2 \omega(z) |dz| < \infty.$$
Therefore the result can be formulated as: 
$$\Ran(\Phi_{\tau, 0}) = A^2(\Delta, \omega_{0, \tau}) + E^2\left(\Delta, 1/|z|\right).$$

In the present paper, we will determine the null-reachable space of equation \eqref{eq-Hermiteheat-halfline}. Since the potential vanishes at $x=0$ which is our point of control, it seems reasonable to expect that the reachable space is not too far from the reachable space of \eqref{eq-Heat-halfline}. Actually, we will prove that they are the same, up to a change of time. 
\begin{thm}
\label{thm-reach-half-line} 
Let $\tau >0$ and $T=\tanh(2\tau)/2$. Then we have 
$$ \Ran \Phi^{\mathrm{H}}_{\tau, \, 0} = \Ran \Phi_{T, 0}. $$
\end{thm}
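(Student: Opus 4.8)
The plan is to reduce the statement to an explicit operator identity $\Phi^{\mathrm H}_{\tau,0}=\Phi_{T,0}\circ\Psi$, where $\Psi$ is an isomorphism of control spaces, so that the two ranges coincide. Everything rests on the explicit kernels, and the time change $T=\tanh(2\tau)/2$ will appear as the unique reparametrisation matching the Gaussian exponents. First I would write down the boundary-input kernels. On the half-line the Dirichlet condition is treated by the method of images: since the harmonic oscillator $\mathrm{H}=\partial_x^2-x^2$ commutes with the reflection $x\mapsto -x$, the Dirichlet kernel on $(0,\infty)$ is $M_t(x,y)-M_t(x,-y)$, where $M_t$ is the Mehler kernel
$$M_t(x,y)=\frac{1}{\sqrt{2\pi\sinh 2t}}\exp\left(-\frac{(x^2+y^2)\cosh 2t-2xy}{2\sinh 2t}\right).$$
The boundary-control solution with zero initial data is then the single-layer formula $\Phi^{\mathrm H}_{\tau,0}u_0(x)=\int_0^\tau N^{\mathrm H}(\tau-s,x)\,u_0(s)\,ds$ with $N^{\mathrm H}(r,x)=2\,\partial_y M_r(x,0)$, and a direct computation gives
$$N^{\mathrm H}(r,x)=\sqrt{\tfrac{2}{\pi}}\;\frac{x}{(\sinh 2r)^{3/2}}\exp\!\left(-\tfrac{x^2}{2}\coth 2r\right).$$
The same reasoning with the Gaussian kernel recovers the classical half-line kernel $N(r,x)=\frac{x}{\sqrt{4\pi}\,r^{3/2}}e^{-x^2/(4r)}$ for $\Phi_{T,0}$.

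The second observation is that the two exponents agree under one substitution: $-\tfrac{x^2}{2}\coth 2r=-\tfrac{x^2}{4\rho}$ forces $\rho=\tfrac12\tanh 2r$. This map sends $(0,\tau)$ onto $(0,T)$ precisely because $T=\tfrac12\tanh 2\tau$, which is where the stated time change originates. I would then perform the change of variables in the time integral. As both maps are causal convolutions, I reverse time ($\sigma=\tau-s$) and substitute $\rho=\tfrac12\tanh 2\sigma$, using $\sinh 2\sigma=2\rho(1-4\rho^2)^{-1/2}$, $\cosh 2\sigma=(1-4\rho^2)^{-1/2}$ and $d\sigma=d\rho/(1-4\rho^2)$. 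Tracking the prefactors, the constants and the Jacobian collapse to a single clean weight, yielding
$$\Phi^{\mathrm H}_{\tau,0}u_0=\Phi_{T,0}\bigl(\Psi u_0\bigr),\qquad (\Psi u_0)(\rho)=(1-4(T-\rho)^2)^{-1/4}\,u_0\!\left(\tau-\tfrac12\operatorname{artanh}\bigl(2(T-\rho)\bigr)\right).$$

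It then remains to check that $\Psi\colon L^2(0,\tau)\to L^2(0,T)$ is an isomorphism. The reparametrisation $\phi(\rho)=\tau-\tfrac12\operatorname{artanh}(2(T-\rho))$ satisfies $\phi(0)=0$, $\phi(T)=\tau$ and $\phi'(\rho)=(1-4(T-\rho)^2)^{-1}$, which stays between $1$ and $\operatorname{sech}^{-2}2\tau$; hence $\phi$ is a bi-Lipschitz bijection and $u_0\mapsto u_0\circ\phi$ is bounded with bounded inverse. The multiplicative weight $(1-4(T-\rho)^2)^{-1/4}$ is likewise bounded above and below by positive constants on $(0,T)$, since $1-4(T-\rho)^2$ ranges in $[\operatorname{sech}^2 2\tau,1]$. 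Therefore $\Psi$ is a linear isomorphism, and
$$\Ran\Phi^{\mathrm H}_{\tau,0}=\Phi_{T,0}\bigl(\Psi(L^2(0,\tau))\bigr)=\Phi_{T,0}\bigl(L^2(0,T)\bigr)=\Ran\Phi_{T,0}.$$

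The main obstacle I anticipate is not the bookkeeping but the functional-analytic justification of the single-layer representation: the pointwise kernel formula for $\Phi^{\mathrm H}_{\tau,0}$ is classical only for smooth compactly supported controls, whereas the theorem concerns $L^2$-controls and values in $X=W^{-1,2}(0,\infty)$. I would therefore establish the identity $\Phi^{\mathrm H}_{\tau,0}=\Phi_{T,0}\Psi$ first on this dense class, and then extend it by density using that both $\Phi^{\mathrm H}_{\tau,0}$ and $\Phi_{T,0}$ are bounded operators into $X$ (by well-posedness) and that $\Psi$ is bounded. A secondary delicate point is verifying that the normalisation constants of the Mehler and Gaussian kernels cancel exactly so that the weight is the clean factor above, together with the integrability of the substitution near the endpoints $\rho=0$ and $\rho=T$, which is guaranteed by the two-sided bounds on $\phi'$ and on the weight.
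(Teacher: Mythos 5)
Your proposal is correct and follows essentially the same route as the paper: the paper likewise writes $\Phi^{\mathrm H}_{\tau,0}$ via the Mehler kernel, performs the change of variable $x=\tanh(2(\tau-s))/2$ (with the final flip $x=T-t$) to obtain the identity $\Phi^{\mathrm H}_{\tau,0}u_0=\Phi_{T,0}\widetilde{u_0}$ with the same weight $(1-4(T-t)^2)^{-1/4}$, and concludes because the map $u_0\mapsto\widetilde{u_0}$ is a bounded invertible operator $L^2(0,\tau)\to L^2(0,T)$ (the paper checks $\int_0^T|\widetilde{u_0}|^2\,dt=\int_0^\tau|u_0(s)|^2\cosh(2(\tau-s))^{-1}ds\asymp\|u_0\|_{L^2}^2$, which is your two-sided bound). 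Your additional density remark about justifying the kernel representation for $L^2$-controls corresponds to what the paper establishes separately in its Section 2 via the method of images, so nothing is missing.
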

This result, interesting in itself, and will be one key of the proof of Theorem \ref{thm-reach-segment-rightinclusion}. \\


The paper is organized as follows. In Section \ref{section-solution}, for the sake of completeness, we compute the solution of \eqref{eq-Hermiteheat} and \eqref{eq-Hermiteheat-halfline} using the method of images. Section \ref{S3} is devoted to the proof of Theorem \ref{thm-reach-half-line},
and Section \ref{S4} to that of Theorem \ref{thm-reach-segment-rightinclusion}. 
In Section \ref{S5} we prove Proposition \ref{prop-reach-segment-centered} and
in Section \ref{S6} Theorem
\ref{thm-reachable-segment}.



\bigskip



\section{Solution of the Hermite heat equation \label{section-solution}}
In this section, we compute the solution of equation \eqref{eq-Hermiteheat}. The method proposed in \cite{HKT2020} in order to tackle the reachable space problem for the heat equation \eqref{eq-Heat} was to compute the solution \emph{via} a decomposition on the orthonormal basis of eigenfunctions of the Dirichlet Laplacian (the sine Fourier basis) and then to use the Poisson summation formula. This allowed the authors to make the heat kernel of the whole real line appear. Actually, the final formula can be obtained alternatively using the method of images. This permits to avoid the Poisson formula step and so to generalize the final formula to other equations. It is this method we choose here in order to establish our formula for the solution of \eqref{eq-Hermiteheat} and \eqref{eq-Hermiteheat-halfline}, involving the Mehler kernel (which is the fundamental solution of the Hermite heat equation). 

\paragraph{Mehler kernel.}
Let us recall some facts about the fundamental solution of the Hermite heat equation on $\R$, i.e.
the solution of  
\begin{equation}
\label{Melher}
\begin{cases}
\partial_t K  -\partial_x^2 K  +x^2 K = 0 , \quad t>0,\  x \in \R \\
K(0, x, \cdot)=\delta_x.
\end{cases}
\end{equation}
where $\delta_x$ stands for the Dirac delta distribution at $x$. 
It is given by the Mehler kernel 
\begin{equation}
\label{eq-Mehler-kernel}
K(t,x,y)= \frac{1}{\sqrt{2\pi \sinh(2t)}} \exp\left(-\mathrm{coth}(2t) \frac{x^2 +y^2}{2} + \frac{xy}{\sinh(2t)}\right).
\end{equation}
It can be obtained using the normalized Hermite functions defined by 
$$h_k(x) = \frac{(-1)^k e^{\frac{x^2}{2}}}{2^k k! \pi^{\frac{1}{2}}}\frac{d^k}{dx^k} e^{-x^2}, \quad x \in \R, k \in \N$$
which are known to form an orthonormal basis of $L^2(\R)$ 
and to be the eigenfunctions of the operator $-\Delta + x^2$ on $L^2(\R)$ associated with the eigenvalues $\lambda_k=2k+1$. Thus, the solution of \eqref{Melher} can be derived decomposing on this orthonormal basis. Finally, the above form of the Mehler kernel is obtained using the so called Mehler formula. 

More general Mehler kernels can be found in \cite[Subsection 2.1]{DE19}.
\\


\paragraph{Computation of the solution using the method of images.}
Let $X:=W^{-1,2}(0, \pi)$. We recall that $ w \in C\left([0, \, \infty), \, X\right)$ is called solution of \eqref{eq-Hermiteheat} if it satisfies 
\begin{align}
\label{weak-sol-HHE} 
\prodscal{w(t)}{\psi}_{-1,\,1} =  & \prodscal{f}{\psi}_{-1,\,1}  +  \int_0^t \prodscal{w(s)}{\frac{d^2 \psi}{dx^2}-x^2 \psi}_{-1,\,1}ds  \notag \\ 
&+ \int_0^t u_0(s) \overline{\frac{d \psi}{dx}(0)}ds - \int_0^t u_\pi(s) \overline{\frac{d \psi}{dx}(\pi)} ds
\end{align}
for every $t \geq 0$ and every $\psi \in W^{2,2}(0, \, \pi) \cap W^{1,2}_0(0, \, \pi)$ such that $\frac{d^2 \psi}{dx^2} \in W^{1,2}_0(0, \, \pi)$ (see for instance \cite[Section 4]{TW2009}). In \eqref{weak-sol-HHE}, $\prodscal{\cdot}{\cdot}_{-1,1}$ denotes the duality  $W^{-1,2}(0, \, \pi) \, -\, W^{1,2}_0(0, \, \pi)$.\\

For ${\varphi \in \mathcal{D}(\R):= C^\infty_\mathrm{c}(\R)}$, we denote by $\varphi_{\mathrm{per}}$ the function 
$$\varphi_{\mathrm{per}}=\sum_{k \in \Z} \varphi(\cdot+2k\pi) - \varphi(2k\pi-\cdot),$$
which is odd and $2\pi$-periodic.
Remark that the sum is finite and $\varphi_{\mathrm{per}}, \frac{d^2 \varphi_{\mathrm{per}}}{dx^2} \in  W^{1,2}_0(0, \, \pi)$.  
Hence, for $T \in W^{-1, \, 2}(0, \, \pi)$, we can define $T_{\mathrm{per}} \in \mathcal{D}'(\R)$ by duality
$$ \forall \varphi \in \mathcal{D}(\R), \ \prodscal{T_{\mathrm{per}}}{\varphi}_{\mathcal{D}'(\R), \, \mathcal{D}(\R)} :=  \prodscal{T}{\overline{\varphi_{\mathrm{per}}}}_{-1, \, 1}.$$
It is easy to check that $T_{\mathrm{per}}$ is well defined as a distribution and $T_{\mathrm{per}} \in \mathcal{S}'(\R)$. Note that $T_{\mathrm{per}}$ is the continuation (in the sense of distributions) of $T$ to $\R$, first by odd extension to $(-\pi, \pi)$ and then by periodic extension to $\R$ with period $2\pi$. 

Given $L^2$ functions $u_0$ and $u_{\pi}$, let $w$ be the unique solution of \eqref{eq-Hermiteheat} with $f=0$ in the sense of \eqref{weak-sol-HHE}. Recall that $\mathcal{D}(\R_t) \otimes \mathcal{D}(\R_x)$ is dense in $\mathcal{D}(\R_t \times \R_x)$. Write $W$ the tempered distribution defined on $\R_t \times \R_x$ by 
$$\forall \zeta \in \mathcal{D}(\R_t), \ \forall \varphi \in \mathcal{D}(\R_x), \  \prodscal{W}{\zeta \otimes \varphi}_{\mathcal{D}'(\R_t \times \R_x), \, \mathcal{D}(\R_t \times \R_x)}:=\int_{\R_+} \prodscal{w(t)_{\mathrm{per}}}{\varphi}_{\mathcal{D}'(\R), \, \mathcal{D}(\R)} \zeta(t) dt. $$ 
Note that 
\begin{equation}
\label{eq-equalitydistrib-sol}
W_{|\R_+^* \times (0, \, \pi)}= w \quad \text{(in the sense of distributions)}.
\end{equation}

\begin{lem}
\label{lemma-equationdistribHHE}
The distribution $W$ is the unique solution in $\mathcal{S}'(\R_t \times \R_x)$ of the equation
\begin{equation}
\label{eq-Hermiteheat-on-R}
\begin{cases}
\partial_t W  -\partial_x^2W  +x^2 W = -2u_0 \mathds{1}_{\R_+}  \otimes \sum_{k \in \Z} \delta'_{2k\pi} \ + \ 2 u_\pi\mathds{1}_{\R_+}  \otimes  \sum_{k \in \Z} \delta'_{(2k+1)\pi}=:F \\
\mathrm{supp}(W) \subset \R_+ \times \R.
\end{cases}
\end{equation}
\end{lem}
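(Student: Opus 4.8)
The plan is to prove the two assertions of the lemma separately: that $W$ solves the distributional equation \eqref{eq-Hermiteheat-on-R}, and that such a solution is unique. Since $\mathcal{D}(\R_t)\otimes\mathcal{D}(\R_x)$ is dense in $\mathcal{D}(\R_t\times\R_x)$, it is enough to test the equation against elementary tensors $\zeta\otimes\varphi$. Transposing the operator, I would first write
\[
\prodscal{\partial_t W-\partial_x^2 W+x^2W}{\zeta\otimes\varphi}=-\prodscal{W}{\partial_t\zeta\otimes\varphi}+\prodscal{W}{\zeta\otimes(-\varphi''+x^2\varphi)},
\]
and then unfold the definition of $W$ and of the periodization $T_{\mathrm{per}}$, turning each pairing into a pairing of $w(t)$ against the odd $2\pi$-periodization (evaluated on $(0,\pi)$) of the corresponding test function. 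A clean elementary point I would record first is that the second space derivative commutes with periodization, $(\varphi'')_{\mathrm{per}}=(\varphi_{\mathrm{per}})''$, which follows directly from differentiating the defining series twice (the reflection $-\varphi(2k\pi-\cdot)$ picks up $(-1)^2=1$).

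Next I would feed in the weak formulation \eqref{weak-sol-HHE} with $f=0$, using the admissible test function $\psi=\varphi_{\mathrm{per}}|_{(0,\pi)}$ (admissibility was noted right after the definition of the periodization). Setting $a(t):=\prodscal{w(t)}{\varphi_{\mathrm{per}}}_{-1,1}$, formula \eqref{weak-sol-HHE} exhibits $a$ as a time integral, so $a$ is absolutely continuous, $a(0)=0$, and $a'(t)=\prodscal{w(t)}{\varphi_{\mathrm{per}}''-x^2\varphi_{\mathrm{per}}}_{-1,1}+u_0(t)\overline{\varphi_{\mathrm{per}}'(0)}-u_\pi(t)\overline{\varphi_{\mathrm{per}}'(\pi)}$. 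Integrating the time-derivative contribution by parts in $t$ (the endpoint term vanishes because $a(0)=0$ and $\zeta$ is compactly supported) replaces $-\prodscal{W}{\partial_t\zeta\otimes\varphi}$ by $\int_{\R_+}a'(t)\zeta(t)\,dt$. After this substitution the two $\prodscal{w(t)}{\varphi_{\mathrm{per}}''}$ contributions cancel, leaving only boundary pieces and a potential piece. The boundary pieces are exactly where $F$ is produced: differentiating the series gives $\varphi_{\mathrm{per}}'(0)=2\sum_{k\in\Z}\varphi'(2k\pi)$ and $\varphi_{\mathrm{per}}'(\pi)=2\sum_{k\in\Z}\varphi'((2k+1)\pi)$, and since $\prodscal{\delta'_a}{\varphi}=-\varphi'(a)$, these reproduce $\prodscal{F}{\zeta\otimes\varphi}$ with the correct factors $-2$, $+2$ and the correct charge locations $2k\pi$ and $(2k+1)\pi$.

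The remaining term, $\int_{\R_+}\prodscal{w(t)}{(x^2\varphi)_{\mathrm{per}}-x^2\varphi_{\mathrm{per}}}_{-1,1}\,\zeta(t)\,dt$, is the heart of the matter and the step I expect to be the main obstacle. For the classical heat equation it is simply absent, but here one must reconcile the periodization of $x^2\varphi$ with $x^2$ times the periodization of $\varphi$; the subtlety is that multiplication by the quadratic potential commutes with an image reflection only when the potential is \emph{even} about the reflection centre, which holds automatically at the control point $x=0$ (where $(-x)^2=x^2$) but not about $x=\pi$. I would therefore isolate this commutation as the crucial lemma, controlling the resulting series by means of the explicit Mehler kernel \eqref{eq-Mehler-kernel}, and it is precisely here that the Hermite case genuinely departs from the constant-coefficient one. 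Once this potential term is dealt with, \eqref{eq-Hermiteheat-on-R} follows, the support condition $\supp(W)\subset\R_+\times\R$ is immediate from $w(t)=0$ for $t\le 0$ together with the integration over $\R_+$ in the definition of $W$, and uniqueness follows by convolving in time with the forward fundamental solution (the Mehler kernel extended by zero for negative times), which is well defined on tempered distributions supported in $\{t\ge 0\}$.
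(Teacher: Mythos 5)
Your strategy is structurally identical to the paper's own proof: reduce to elementary tensors by density, plug the admissible test function $\psi=\varphi_{\mathrm{per}}|_{(0,\pi)}$ into the weak formulation \eqref{weak-sol-HHE}, use $(\varphi_{\mathrm{per}})''=(\varphi'')_{\mathrm{per}}$, and read the dipole sources $F$ off the boundary terms via $\varphi_{\mathrm{per}}'(0)=2\sum_{k}\varphi'(2k\pi)$ and $\varphi_{\mathrm{per}}'(\pi)=2\sum_{k}\varphi'((2k+1)\pi)$; all of that matches. The genuine gap is the step you yourself single out as ``the heart of the matter'' and then defer to an unproven ``crucial lemma'': reconciling $x^2\varphi_{\mathrm{per}}$ with $(x^2\varphi)_{\mathrm{per}}$ on $(0,\pi)$. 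Your proposal never establishes this, so it is incomplete as written; worse, no argument can establish it, because the needed identity is false. Take $\varphi\in\mathcal{D}(\R)$ supported in $(2\pi,3\pi)$: on $(0,\pi)$ one has $\varphi_{\mathrm{per}}(x)=\varphi(x+2\pi)$, so $x^2\varphi_{\mathrm{per}}(x)=x^2\varphi(x+2\pi)$, whereas $(x^2\varphi)_{\mathrm{per}}(x)=(x+2\pi)^2\varphi(x+2\pi)$. The difference $(4\pi x+4\pi^2)\varphi(x+2\pi)$ is a nonzero element of $W^{1,2}_0(0,\pi)$, and its pairing against $w(t)$ --- which is exactly the term your argument needs to vanish --- does not vanish for generic controls (for such $\varphi$ the right-hand side $F$ tested against $\zeta\otimes\varphi$ is zero, while the left-hand side is not). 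Note also that your diagnosis is too narrow: the failure is not confined to the reflection about $\pi$; every term with $k\neq0$ fails, including the pure translations, because $x^2$ is not $2\pi$-periodic. And the Mehler kernel cannot rescue this step, since the identity at stake involves only $w$ and the test functions, not the kernel.

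Concretely, what goes wrong is this: on each cell, say $(2\pi,3\pi)$, the extension $W$ coincides with the translate $w(t,\cdot-2\pi)$ and therefore solves the equation with the shifted potential $(x-2\pi)^2$, not $x^2$; hence $\partial_t W-\partial_x^2 W+x^2W=[x^2-(x-2\pi)^2]\,W$ there, a term spread over the whole cell, not supported on the lattice, and not contained in $F$. So the equation \eqref{eq-Hermiteheat-on-R} can only hold after $F$ is corrected by these mismatch terms (equivalently, after $x^2$ is replaced by its even $2\pi$-periodic extension). You should be aware that the paper's own proof performs exactly the substitution you worried about, silently, when it replaces $\frac{d^2 \overline{\varphi_{\mathrm{per}}}}{dx^2}-x^2 \overline{\varphi_{\mathrm{per}}}$ by $\overline{(\frac{d^2 \varphi}{dx^2}-x^2 \varphi)_{\mathrm{per}}}$ inside the duality bracket: your instinct about where the difficulty lies, and about this being the precise point where the Hermite case departs from the constant-coefficient one, is correct, but flagging the step is not the same as closing it, and as stated it cannot be closed. (Your uniqueness sketch, via the forward Mehler evolution acting on tempered distributions supported in $\{t\ge 0\}$, is reasonable in principle; the paper does not discuss uniqueness at all.)
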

\begin{proof}
Since $W$ and $F$ belong to $\mathcal{S}'(\R_t \times \R_x)$ (recall that $L^2 \subset \mathcal{S}'$), it suffices to prove the equality in $\mathcal{D}'(\R_t \times \R_x)$. 
For every $\zeta \in \mathcal{D}(\R_t)$ and every $\varphi \in \mathcal{D}(\R_x)$, 
\begin{align*}
&\prodscal{\partial_t W}{\ \zeta \otimes \varphi}_{\mathcal{D}'(\R_t \times \R_x), \, \mathcal{D}(\R_t \times \R_x)}= -\prodscal{W}{\ \partial_t \zeta \otimes \varphi}_{\mathcal{D}'(\R_t \times \R_x), \, \mathcal{D}(\R_t \times \R_x)}\\
&=-\int_{\R_+} \prodscal{w(t)_{\mathrm{per}}}{\varphi}_{\mathcal{D}'(\R), \, \mathcal{D}(\R)} \frac{d \zeta}{dt}(t) dt\\
&=\int_{\R_+} \frac{d}{dt}\left(\prodscal{w(t)_{\mathrm{per}}}{\varphi}_{\mathcal{D}'(\R), \, \mathcal{D}(\R)}\right)  \zeta(t) dt.
\end{align*}

But, for each $t \geq 0$ and for every $\varphi \in \mathcal{D}(\R)$, we have (using \eqref{weak-sol-HHE})
\begin{align*}
&\prodscal{w(t)_{\mathrm{per}}}{\varphi}_{\mathcal{D}'(\R), \, \mathcal{D}(\R)} = \prodscal{w(t)}{\overline{\varphi_{\mathrm{per}}}}_{-1, \, 1} \\
&= \int_0^t \prodscal{w(s)}{\ \frac{d^2 \overline{\varphi_{\mathrm{per}}}}{dx^2}-x^2 \overline{\varphi_{\mathrm{per}}}}_{-1,\,1}ds
+ \int_0^t u_0(s) \frac{d \varphi_{\mathrm{per}}}{dx}(0)ds - \int_0^t u_\pi(s) \frac{d\varphi_{\mathrm{per}}}{dx}(\pi) ds\\
&= \int_0^t \prodscal{w(s)}{\ \overline{\left(\frac{d^2 \varphi}{dx^2}-x^2 \varphi \right)_{\mathrm{per}}}}_{-1,\,1}ds\\
&\qquad + \int_0^t 2 u_0(s) \sum_{k \in \Z} \frac{d \varphi}{dx}(2k\pi) ds - \int_0^t 2 u_\pi(s) \sum_{k \in \Z} \frac{d \varphi}{dx}((2k+1)\pi) ds \\
&= \int_0^t \prodscal{w(s)_{\mathrm{per}}}{\ \frac{d^2 \varphi}{dx^2}-x^2 \varphi }_{\mathcal{D}', \, \mathcal{D}}ds\\
&\quad -2 \int_0^t u_0(s) \prodscal{\sum_{k \in \Z} \delta'_{2k\pi}}{\varphi}_{\mathcal{D}', \, \mathcal{D}} ds + 2 \int_0^t u_\pi(s) \prodscal{\sum_{k \in \Z} \delta'_{(2k+1)\pi}}{\varphi}_{\mathcal{D}', \, \mathcal{D}} ds 
\end{align*}

Therefore, we get 
\begin{align*}
&\prodscal{\partial_t W}{\ \zeta \otimes \varphi}_{\mathcal{D}'(\R_t \times \R_x), \, \mathcal{D}(\R_t \times \R_x)}\\
&=\int_{\R_+} \left[ \prodscal{w(t)_{\mathrm{per}}}{\ \frac{d^2 \varphi}{dx^2}-x^2 \varphi }_{\mathcal{D}', \, \mathcal{D}} -2 u_0(t) \prodscal{\sum_{k \in \Z} \delta'_{2k\pi}}{\varphi}_{\mathcal{D}', \, \mathcal{D}}\right. \\
& \qquad \qquad \qquad \left. + 2 u_\pi(t) \prodscal{\sum_{k \in \Z} \delta'_{(2k+1)\pi}}{\varphi}_{\mathcal{D}', \, \mathcal{D}} \right] \zeta(t) dt \\
&= \prodscal{(\partial^2_x - x^2) W}{\ \zeta \otimes \varphi}_{\mathcal{D}', \, \mathcal{D}} - 2 \prodscal{u_0 \mathds{1}_{\R_+}  \otimes \sum_{k \in \Z} \delta'_{2k\pi}}{\ \zeta \otimes \varphi}_{\mathcal{D}', \, \mathcal{D}} \\
& \qquad \qquad \qquad + 2 \prodscal{u_\pi\mathds{1}_{\R_+}  \otimes  \sum_{k \in \Z} \delta'_{(2k+1)\pi}}{\ \zeta \otimes \varphi}_{\mathcal{D}', \, \mathcal{D}}.
\end{align*}

\end{proof}

\begin{prop}
\label{prop-solHHE}
The solution of the Hermite heat equation \eqref{eq-Hermiteheat} with $f=0$ is given by
\begin{equation}
\label{eq-solution-HHE-formula}
\begin{split}
\forall t >0, \ \forall x \in (0, \pi), \quad w(t,x)= 2 &\int_0^t \sum_{k \in \Z} \frac{\partial K}{\partial y}(t-s, x, 2k\pi) u_0(s) ds \\
&- 2 \int_0^t \sum_{k \in \Z} \frac{\partial K}{\partial y}(t-s, x, (2k+1)\pi) u_\pi(s) ds.
\end{split}
\end{equation}
\end{prop}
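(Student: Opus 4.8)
The plan is to solve the distributional equation \eqref{eq-Hermiteheat-on-R} furnished by Lemma \ref{lemma-equationdistribHHE} through Duhamel's principle, using the Mehler kernel as the propagator of the harmonic oscillator semigroup, and then to identify the outcome with $W$ via the uniqueness asserted in that lemma. Since the operator $\partial_t-\partial_x^2+x^2$ is \emph{not} invariant under translations in $x$ (because of the potential), one cannot convolve in the space variable; the relevant object is instead the causal fundamental solution $E(t,x,y)=\mathds{1}_{\R_+}(t)\,K(t,x,y)$, which satisfies $(\partial_t-\partial_x^2+x^2)E(\cdot,\cdot,y)=\delta_0\otimes\delta_y$ and whose spatial action is exactly the semigroup $\T^{\mathrm{H}}_t$, namely $(\T^{\mathrm{H}}_t g)(x)=\int_\R K(t,x,y)\,g(y)\,dy$.

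First I would record the elementary identity $\int_\R K(t,x,y)\,\delta'_a(y)\,dy=-\frac{\partial K}{\partial y}(t,x,a)$, coming from $\langle \delta'_a,\phi\rangle=-\phi'(a)$ applied to the test function $y\mapsto K(t,x,y)$. Applying the semigroup to the spatial part of the source $F$ and integrating in time (Duhamel), the candidate solution is
$$
\tilde w(t,x)=\int_0^t\!\!\int_\R K(t-s,x,y)\Big(\!-2u_0(s)\sum_{k}\delta'_{2k\pi}+2u_\pi(s)\sum_k\delta'_{(2k+1)\pi}\Big)(y)\,dy\,ds,
$$
which, after the identity above is used (and the two sign changes it produces), is precisely the right-hand side of \eqref{eq-solution-HHE-formula}.

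Before this formal computation can be trusted I would establish the analytic estimates that make it rigorous. The explicit form \eqref{eq-Mehler-kernel} shows that $K(t-s,x,y)$ and $\partial_y K(t-s,x,y)$ carry a Gaussian factor $\exp(-\coth(2(t-s))\,y^2/2)$; hence for $x$ in a compact subset of $(0,\pi)$ and $s$ bounded away from $t$ the series $\sum_k\partial_y K(t-s,x,2k\pi)$ and $\sum_k\partial_y K(t-s,x,(2k+1)\pi)$ converge absolutely and uniformly, defining a smooth function of $(t,x)$ for $t>0$. The singularity of the kernel as $s\to t^-$ is harmless: for interior $x$ the source points $0,\pm2\pi,\dots$ and $\pm\pi,\pm3\pi,\dots$ all lie at positive distance from $x$, so $\partial_y K(t-s,x,a)$ stays bounded (indeed tends to $0$) there, and combined with $u_0,u_\pi\in L^2$ this gives integrability in $s$. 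These bounds justify interchanging the summation, the $s$-integration and the differentiations, so that $\tilde w$, extended by $0$ for $t\le 0$, is a tempered distribution supported in $\R_+\times\R$ and solves \eqref{eq-Hermiteheat-on-R}.

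Finally, by the uniqueness part of Lemma \ref{lemma-equationdistribHHE}, $\tilde w=W$ as distributions, and restricting to $\R_+^*\times(0,\pi)$ via \eqref{eq-equalitydistrib-sol} yields $w(t,x)=\tilde w(t,x)$, that is, formula \eqref{eq-solution-HHE-formula}. The main obstacle is the rigorous justification of the interchange of sum, integral and derivative in $\mathcal{S}'$ near the diagonal $s=t$; everything there hinges on the Gaussian decay and the $y$-derivative bounds of the Mehler kernel, which is exactly why the explicit expression \eqref{eq-Mehler-kernel} is the decisive ingredient.
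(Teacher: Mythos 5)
Your proposal is correct and follows essentially the same route as the paper: both apply Duhamel's formula with the Mehler kernel to the distributional source $F$ from Lemma \ref{lemma-equationdistribHHE}, evaluate the pairings $\prodscal{\delta'_a}{K(t-s,x,\cdot)}=-\frac{\partial K}{\partial y}(t-s,x,a)$, invoke the uniqueness in $\mathcal{S}'$ asserted in that lemma, and restrict via \eqref{eq-equalitydistrib-sol}. The only difference is that you spell out the convergence and interchange justifications (Gaussian decay, behaviour near $s=t$ for interior $x$) which the paper compresses into the phrase ``by Duhamel's formula''.
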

\begin{proof}
By Lemma \ref{lemma-equationdistribHHE}, $W$ is the unique solution of equation \eqref{eq-Hermiteheat-on-R} in $\mathcal{S}'$
which is by Duhamel's formula
\begin{align*}
W(t, x)&= \prodscal{F}{\ K(t-\cdot \, , \,  x, \, \cdot \,)}_{\mathcal{S}'(\R_s \times \R_y), \, \mathcal{S}(\R_s \times \R_y)} \\
&= -2\int_0^t \prodscal{\sum_{k \in \Z} \delta'_{2k\pi}}{\ K(t-s, x, \cdot)}_{\mathcal{S}', \, \mathcal{S}} u_0(s) ds \\
& \qquad + 2 \int_0^t \prodscal{\sum_{k \in \Z} \delta'_{(2k+1)\pi}}{\ K(t-s, x, \cdot)}_{\mathcal{S}', \, \mathcal{S}}  u_\pi(s) ds \\
&= 2 \int_0^t \sum_{k \in \Z} \frac{\partial K}{\partial y}(t-s, x, 2k\pi) u_0(s) ds \\
&\qquad - 2 \int_0^t \sum_{k \in \Z} \frac{\partial K}{\partial y}(t-s, x, (2k+1)\pi) u_\pi(s) ds
\end{align*}
The result follows from \eqref{eq-equalitydistrib-sol}.
\end{proof}

With an analogous method, we get the solution of \eqref{eq-Hermiteheat-halfline} with $f=0$ which is 
\begin{equation}
\label{eq-solHHEhl-formula}
\Phi^{\mathrm{H}}_{t, \, 0} u_0 (x)= 2 \int_0^t \frac{\partial K}{\partial y}(t-s, x, 0) u_0(s) ds.
\end{equation}
It can be noticed that it corresponds to the zero term of the first sum in \eqref{eq-solution-HHE-formula}. In the same way, we denote by $\Phi^{\mathrm{H}}_{t, \, \pi} u_\pi$ the zero term of the second sum in \eqref{eq-Hermiteheat-halfline}, which is actually the solution of the Hermite heat equation on $(-\infty, \pi)$ with Dirichlet boundary condition $u_\pi$ at $x=\pi$ and with null initial condition. This leads to the following writing 
\begin{equation}
\label{eq-solHHE-formula2} 
\Phi^{\mathrm{H}}_t (u_0, u_\pi)=\Phi^{\mathrm{H}}_{t, \, 0} u_0 + \Phi^{\mathrm{H}}_{t, \, \pi} u_\pi  + R_{t, \, 0} u_0   + R_{t, \, \pi} u_\pi 
\end{equation}
where $R_{t, \, 0} u_0$   and $R_{t, \, \pi} u_\pi$ are the remainder terms
\begin{equation}
\label{eq-rest1}R_{t, \, 0} u_0 (x) = 2 \int_0^t \sum_{k \in \Z^*} \frac{\partial K}{\partial y}(t-s, x, 2k\pi) u_0(s) ds,
\end{equation}
\begin{equation}
\label{eq-rest2}
R_{t, \, \pi} u_\pi (x)= - 2 \int_0^t \sum_{k \in \Z^*} \frac{\partial K}{\partial y}(t-s, x, (2k+1)\pi) u_\pi(s) ds.
\end{equation}

\section{Null-reachable space on the half-line \label{section-reachable-HHE-half-line}}\label{S3}
In this section, we prove Theorem \ref{thm-reach-half-line}. 
 \begin{proof}[Proof of Theorem \ref{thm-reach-half-line}]
 First, let us recall the expression of $\Phi_{T, 0}$ (see e.g. \cite{HKT2020} or \cite{Ors2021}).
 $$\Phi_{T, 0}f (z) =  \int_0^T \frac{z e^{- \frac{z^2}{4(T-\sigma)}}}{2\sqrt{\pi} {(T-\sigma)}^{\frac{3}{2}}} f(\sigma) d\sigma.$$
Then, using the change of variable $x=\alpha(s):=\tanh(2(\tau-s))/2$ (which leads to $ds=-\frac{dx}{1-4x^2}$ and $\sinh(2(\tau-s))=\frac{2x}{\sqrt{1-4x^2}}$), we obtain 
\begin{align*}
\Phi^{\mathrm{H}}_{\tau, \, 0}u_0(z)&:=2\int_{0}^\tau \frac{\partial K}{\partial y}(\tau - s, z , 0) u_0(s)\, ds \\
&= \frac{2z}{\sqrt{2 \pi}}\int_0^\tau \frac{e^{-\coth(2(\tau-s))z^2/2}}{\sinh(2(\tau-s))^{3/2}} u_0(s) ds\\
&= \frac{z}{2 \sqrt{\pi}} \int_0^{\frac{\tanh(2\tau)}{2}} \frac{e^{-\frac{z^2}{4x}}}{x^{3/2}} (1-4x^2)^{-1/4} u_0(\alpha^{-1}(x)) dx.
\end{align*}
Now, setting $T=\tanh(2\tau)/2$ and the change of variable $x=T-t$ leads to 
$$\left[\Phi^{\mathrm{H}}_{\tau, \, 0}u_0\right](z)= \left[\Phi_{T, 0} \widetilde{u_0}\right](z) $$
where $\widetilde{u_0}$ is given by 
$$\widetilde{u_0}(t)=(1-4(T-t)^2)^{-1/4} u_0(\alpha^{-1}(T-t)).$$
Since $\tanh(2\tau)/2<1/2$, it is clear that the operator $S: L^2((0, \tau); \C) \to L^2((0, T); \C)$ defined by 
$$Su_0=\widetilde{u_0}$$
is boundedly invertible and we have 
$$\int_0^T |\widetilde{u_0}(t)|^2 dt = \int_0^\tau \frac{1}{\cosh(2(\tau-s))} |u_0(s)|^2 ds \asymp \int_0^\tau |u_0(s)|^2ds.$$ 
Hence, 
$$\Ran \Phi^{\mathrm{H}}_{\tau, \, 0} = \Ran \Phi_{T, 0}$$
and the proof is complete. 
\end{proof}

An immediate consequence of the above discussion is that $\Phi^H_{\tau,0}$ is bounded
from $L^2(0,\tau)$ to $A^2(D)$ which we state here for later purposes as a separated result (and even to a suitably weighted Bergman space on the sector $\Delta$).
\begin{cor}\label{Cor7}
The operator $\Phi^H_{0,\tau}$ is bounded from $L^2(0,\tau)$ to $A^2(D)$.
\end{cor}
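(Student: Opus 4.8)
The plan is to leverage Theorem \ref{thm-reach-half-line} together with the known characterization of the reachable space for the classical heat equation on the half-line. By that theorem we have $\Ran \Phi^{\mathrm{H}}_{\tau, 0} = \Ran \Phi_{T, 0}$ with $T = \tanh(2\tau)/2$, and the proof actually establishes more: it exhibits the explicit boundedly invertible substitution operator $S$ with $\Phi^{\mathrm{H}}_{\tau,0} = \Phi_{T,0} \circ S$. So to bound $\Phi^{\mathrm{H}}_{\tau,0}$ from $L^2(0,\tau)$ into $A^2(D)$ it suffices to bound $\Phi_{T,0}$ from $L^2(0,T)$ into $A^2(D)$ and then precompose with $S$, whose boundedness on the relevant $L^2$ spaces is already recorded via the equivalence $\int_0^T |\widetilde{u_0}|^2 \asymp \int_0^\tau |u_0|^2$.

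First I would recall from the excerpt the inclusion $A^2(\Delta, \omega_{0,T}) \subset \Ran(\Phi_{T,0})$ coming from Aikawa–Hayashi–Saitoh \cite{AHS1990}, together with the sharper identity $\Phi_{T,0}[L^2((0,T); dt/t)] = A^2(\Delta, \omega_{0,T})$. The weight $\omega_{0,T}(z) = \frac{1}{T} e^{\Re(z^2)/(2T)}$ decays rapidly along the sector $\Delta = \{|\arg z| < \pi/4\}$, where $\Re(z^2) = x^2 - y^2 \to +\infty$ only near the imaginary-axis boundary but is controlled on the bounded square $D$. Since $D \subset \Delta$ and $D$ is bounded and at positive distance from the problematic part of $\partial\Delta$, on $D$ the weight $\omega_{0,T}$ is bounded below by a positive constant. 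Hence $\|f\|_{A^2(D)} \lesssim \|f\|_{A^2(\Delta, \omega_{0,T})}$ for $f$ holomorphic on $\Delta$, which transfers the AHS boundedness of $\Phi_{T,0}$ into a bound into $A^2(D)$.

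The key estimate to make precise is therefore that $\Phi_{T,0}$ maps $L^2(0,T)$ boundedly into $A^2(D)$. Using the explicit kernel $\Phi_{T,0}f(z) = \int_0^T \frac{z e^{-z^2/(4(T-\sigma))}}{2\sqrt\pi (T-\sigma)^{3/2}} f(\sigma)\, d\sigma$, one would estimate $\|\Phi_{T,0}f\|_{A^2(D)}$ directly by writing out $\int_D |\Phi_{T,0}f(z)|^2 dx\,dy$, applying Cauchy–Schwarz in the $\sigma$-integral against a suitable test weight (the natural choice being the $dt/t$ weight from AHS), and invoking Fubini to reduce to a pointwise kernel bound. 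The integrability is governed by the Gaussian factor $e^{-\Re(z^2)/(4(T-\sigma))}$, which for $z$ in the bounded square $D$ and $\sigma$ away from $T$ is harmless, while the singularity as $\sigma \to T$ is absorbed by the rapid decay of the exponential times the explicit power of $(T-\sigma)$, exactly as in the half-line analysis.

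The main obstacle will be controlling the kernel uniformly near the corner of the integration, namely as $\sigma \to T^-$ simultaneously with $z$ approaching the origin (the common vertex of $D$ and $\Delta$), where both the $(T-\sigma)^{-3/2}$ singularity and the degeneration of the Gaussian compete. The clean way around this is to cite the AHS result as a black box: rather than re-deriving the kernel estimate, I would state that $\Phi_{T,0}$ is bounded from $L^2(0,T)$ into $A^2(\Delta,\omega_{0,T})$ (which is implicit in the reproducing-kernel identity of \cite{AHS1990}), observe $\omega_{0,T} \gtrsim 1$ on the bounded region $D \subset \Delta$ so that $A^2(\Delta,\omega_{0,T}) \hookrightarrow A^2(D)$ continuously, and finally compose with the boundedly invertible $S$ from the proof of Theorem \ref{thm-reach-half-line} to conclude $\|\Phi^{\mathrm{H}}_{\tau,0} u_0\|_{A^2(D)} = \|\Phi_{T,0}(S u_0)\|_{A^2(D)} \lesssim \|S u_0\|_{L^2(0,T)} \asymp \|u_0\|_{L^2(0,\tau)}$.
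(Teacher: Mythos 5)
Your architecture is the same as the paper's: factor $\Phi^{\mathrm{H}}_{\tau,0}=\Phi_{T,0}\circ S$ through the boundedly invertible substitution operator $S$ from the proof of Theorem \ref{thm-reach-half-line}, then bound the classical half-line operator $\Phi_{T,0}$ from $L^2(0,T)$ into $A^2(D)$. The gap is in the black box you invoke for that second (and only nontrivial) step. The claim that $\Phi_{T,0}$ is bounded from \emph{unweighted} $L^2(0,T)$ into $A^2(\Delta,\omega_{0,T})$ is not implicit in \cite{AHS1990}, and it is in fact false. The AHS theorem concerns controls in $L^2((0,T);dt/t)$; since $1/t\ge 1/T$ on $(0,T)$, one has $\|f\|_{L^2(dt)}\le\sqrt{T}\,\|f\|_{L^2(dt/t)}$, so $L^2(dt/t)$ is the \emph{smaller} space, and a bound on it says nothing about $L^2(dt)$. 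Concretely, take $u\equiv1\in L^2(0,T)\setminus L^2(dt/t)$. Then $\Phi_{T,0}u(z)=\mathrm{erfc}\bigl(z/(2\sqrt{T})\bigr)$, and the asymptotics $\mathrm{erfc}(w)\sim e^{-w^2}/(\sqrt{\pi}\,w)$, valid uniformly for $|\arg w|\le\pi/4$, give
\[
\bigl|\Phi_{T,0}u(z)\bigr|^2\,\omega_{0,T}(z)\sim\frac{4}{\pi|z|^2},\qquad |z|\to\infty,\ z\in\Delta,
\]
which fails (logarithmically) to be integrable over the unbounded sector; hence $\Phi_{T,0}u\notin A^2(\Delta,\omega_{0,T})$ and your chain breaks at its central link. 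Your fallback of Cauchy--Schwarz in $\sigma$ plus Fubini does not rescue it: it yields $|\Phi_{T,0}f(z)|^2\lesssim\|f\|_2^2\,|z|^2\,\Re(z^2)^{-2}$, and $\int_D|z|^2\,\Re(z^2)^{-2}\,dA(z)=\infty$ because $\Re(z^2)$ vanishes linearly on the two sides of $D$ that lie \emph{on} $\partial\Delta$ (so $D$ is not at positive distance from $\partial\Delta$, contrary to what you assert; also $\omega_{0,T}$ grows, rather than decays, inside $\Delta$). Controlling this boundary behaviour is exactly the difficulty that required the finer analysis of \cite{HKT2020} and \cite{Ors2021}.

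The repair stays entirely within your architecture: replace the AHS citation by equality \eqref{eq-inclusion-heat-halfline} (from \cite{Ors2021}), which gives $\Ran(\Phi_{T,0})\subset A^2(\Delta)$ for unweighted $L^2$ controls. Boundedness of $\Phi_{T,0}\colon L^2(0,T)\to A^2(\Delta)$ then follows either from the cited works or from the closed graph theorem: if $u_n\to u$ in $L^2(0,T)$ and $\Phi_{T,0}u_n\to g$ in $A^2(\Delta)$, then, the kernel $\sigma\mapsto z e^{-z^2/(4(T-\sigma))}/\bigl(2\sqrt{\pi}(T-\sigma)^{3/2}\bigr)$ being in $L^2(0,T)$ for each fixed $z\in\Delta$, one gets $\Phi_{T,0}u_n(z)\to\Phi_{T,0}u(z)$ pointwise, whence $g=\Phi_{T,0}u$. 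Since $D\subset\Delta$, restriction gives $\|\cdot\|_{A^2(D)}\le\|\cdot\|_{A^2(\Delta)}$, and composing with $S$ yields $\|\Phi^{\mathrm{H}}_{\tau,0}u_0\|_{A^2(D)}=\|\Phi_{T,0}(Su_0)\|_{A^2(D)}\lesssim\|Su_0\|_{L^2(0,T)}\asymp\|u_0\|_{L^2(0,\tau)}$, which is the corollary. (Your embedding observation $A^2(\Delta,\omega_{0,T})\hookrightarrow A^2(D)$ is itself correct --- because $\omega_{0,T}\ge1/T$ on all of $\Delta$, not because of any distance to $\partial\Delta$ --- but it is the wrong space to pass through, since the unweighted-$L^2$ range does not sit inside it.)
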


\section{Proof of Theorem \ref{thm-reach-segment-rightinclusion}}
\label{S4}
\begin{proof}[Proof of Theorem \ref{thm-reach-segment-rightinclusion}]
Let $u_0, u_\pi \in L^2([0, \tau]; \C)$. 
In view of equalities \eqref{eq-solHHEhl-formula}, \eqref{eq-solHHE-formula2}, \eqref{eq-rest1} and \eqref{eq-rest2}, we will prove that each term belongs to $A^2(D)$. 
With the explicit structure of the Mehler kernel in mind (see \eqref{eq-Mehler-kernel}), it is easy to show that $R_{\tau, \, 0} u_0, \, R_{\tau, \, \pi} u_\pi \in A^2(D)$.
Indeed, consider for instance $R_{\tau,0}$. Similarly as in \cite[Proposition 1.2]{HKT2020},
we get
\begin{eqnarray*}
|R_{\tau,0}u_0(z)|
 &\le&2\sum_{k\in\Z^*}\int_0^{\tau}|\frac{\partial K}{\partial y}(\tau-s),z,2k\pi)u_0(s)|ds\\
 &\le& 2\sum_{k\in\Z^*}\left(\int_0^{\tau}|\frac{\partial K}{\partial y}(\tau-s),z,2k\pi)|^2ds\right)^{1/2}
 \|u_0\|_2,
\end{eqnarray*}
and, since $\Re (z^2)>0$,
\begin{eqnarray*}
|\frac{\partial K}{\partial y}(t,z,2k\pi)|
&\lesssim& \frac{1+2k\pi}{t^{3/2}}
 \exp\left(-\coth(2t)\frac{\Re z^2+(2k\pi)^2}{2}+\frac{\Re z \times 2k\pi}{\sinh(2t)}\right)\\
&\le&  \frac{1+2k\pi}{t^{3/2}}
 \exp\left(-\coth(2t)\frac{(2k\pi)^2}{2}+\frac{ 2|k|\pi^2}{\sinh(2t)}\right)\\
&\le& \frac{1+2k\pi}{t^{3/2}}
 \exp\left(\frac{-2k^2\pi^2+2|k|\pi^2}{\sinh(2t)}\right),\\
\end{eqnarray*}
which yields the required integrability condition in $t$ and the summability for 
$k\in \Z^*\setminus\{1\}$. When $k=1$, the above estimate is too crude, and one has
to distinguish between $z$ being in $D$ close to $\pi$ (say $\Re z>3\pi/4$), 
in which case $\Re z^2/2$ in the
first term in the exponential adds the necessary convergence, and $z$ being far from
$\pi$ (say $\Re z\le 3\pi/4$), in which case the second term in the exponential
is controlled by $3\pi^2/2$ which yields the required convergence.
The term $R_{\tau,\pi}$ is treated in the same fashion.

Next, 
by Corollary \ref{Cor7} we have $\Phi^{\mathrm{H}}_{\tau, \, 0} u_0 \in A^2(D)$. Hence, it remains to prove that the range of $\Phi^{\mathrm{H}}_{\tau, \, \pi}$ is contained in $A^2(D)$. Since this operator is the input-to-state map of the Hermite heat equation on $(-\infty, \pi)$ as explained after \eqref{eq-solHHEhl-formula}, we upgrade this result as a lemma which concludes the proof of the theorem. 

\begin{lem}
\label{lemma8}
We have $\Ran \Phi^{\mathrm{H}}_{\tau, \, \pi} \subset A^2(D)$. 
\end{lem}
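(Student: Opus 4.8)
The plan is to reduce $\Phi^{\mathrm H}_{\tau,\pi}$ to the corresponding \emph{heat} input-to-state map at the endpoint $\pi$, for which boundedness into $A^2(D)$ is already available, and to treat the Mehler-versus-heat discrepancy as a genuinely more regular remainder. First I would write the operator explicitly with the Mehler kernel,
$$\Phi^{\mathrm H}_{\tau,\pi}u_\pi(z) = -2\int_0^\tau \frac{\partial K}{\partial y}(\tau-s,z,\pi)\,u_\pi(s)\,ds,$$
and perform exactly the change of variable $x=\tanh(2(\tau-s))/2$ used in the proof of Theorem~\ref{thm-reach-half-line}, turning the $s$-integral into an integral over $x\in(0,T)$ with $T=\tanh(2\tau)/2<1/2$. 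A direct computation shows the exponent of the Mehler kernel becomes $-\frac{(z-\pi)^2}{4x}+\frac{z\pi}{2x}\big(\sqrt{1-4x^2}-1\big)$, i.e. the heat-kernel exponent $-\frac{(z-\pi)^2}{4x}$ (with time $x$ and source $\pi$) plus a correction; likewise the algebraic prefactor and the Jacobian differ from their heat counterparts only by factors that are smooth in $x$ on $[0,T]$ and equal to $1$ at $x=0$.

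Then I would isolate this discrepancy in a single multiplicative factor $m(z,x)$, collecting the exponential correction, the prefactor ratio $\frac{z\sqrt{1-4x^2}-\pi}{z-\pi}$ and the powers of $1-4x^2$, while absorbing all purely $x$-dependent weights into a modified control $\tilde u_\pi = Su_\pi$, where $S$ is the same boundedly invertible operator as in Theorem~\ref{thm-reach-half-line}. This gives
$$\Phi^{\mathrm H}_{\tau,\pi}u_\pi(z)=\int_0^T K_\pi(z,x)\,m(z,x)\,\tilde u_\pi(x)\,dx,$$
where $K_\pi(z,x)$ is the heat input-to-state kernel at $\pi$ (with time $x$). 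Splitting $m=1+(m-1)$ yields $\Phi^{\mathrm H}_{\tau,\pi}u_\pi = \Phi_{T,\pi}\tilde u_\pi + \mathcal R u_\pi$. For the main term, $\Phi_{T,\pi}\colon L^2(0,T)\to A^2(D)$ is bounded: this follows from Corollary~\ref{Cor7} together with the reflection $z\mapsto\pi-z$, which maps the heat control problem at $\pi$ to the one at $0$ and leaves both $D$ (symmetric about $\Re z=\pi/2$) and the $A^2$-norm invariant; hence $\|\Phi_{T,\pi}\tilde u_\pi\|_{A^2(D)}\lesssim\|\tilde u_\pi\|_2\asymp\|u_\pi\|_2$.

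The crux is the remainder $\mathcal R$. The key elementary estimate is that $m(z,x)-1=O(x)$ uniformly for $z\in\overline D$ and $x\in(0,T)$: indeed $\frac{\sqrt{1-4x^2}-1}{2x}=O(x)$, all the other factors making up $m$ are smooth and equal $1$ at $x=0$, and $z$ ranges over a bounded set. This extra power of $x$ relative to the heat kernel removes the corner singularity at $z=\pi$: estimating $|K_\pi(z,x)|\lesssim |z-\pi|\,x^{-3/2}e^{-\Re((z-\pi)^2)/(4x)}$ and using $\Re((z-\pi)^2)\asymp|z-\pi|^2$ on $D$ near $\pi$, the $x$-integral of $|K_\pi(z,x)(m-1)|^2$ is $\lesssim|z-\pi|^2\log(1/|z-\pi|)$, which is integrable over $D$. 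Thus $\mathcal R$ is Hilbert--Schmidt from $L^2(0,\tau)$ into $L^2(D)$; since its kernel is holomorphic in $z$, its range lies in $A^2(D)$. I expect this corner analysis to be the main obstacle, since the heat term $\Phi_{T,\pi}$ is itself only borderline (not Hilbert--Schmidt), so it is precisely the vanishing $m-1=O(x)$ that must be exploited. Finally, holomorphy of both terms on $D$ (the integrals converge locally uniformly in $z$) together with the $L^2(D)$ bounds yields $\Phi^{\mathrm H}_{\tau,\pi}u_\pi\in A^2(D)$, proving $\Ran\Phi^{\mathrm H}_{\tau,\pi}\subset A^2(D)$.
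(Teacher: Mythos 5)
Your overall strategy --- change variables $x=\tanh(2(\tau-s))/2$ to turn the Mehler kernel into the heat kernel at $\pi$ times a multiplicative correction $m(z,x)$, handle the heat term via Corollary \ref{Cor7} and the reflection $z\mapsto \pi-z$, and show the remainder is Hilbert--Schmidt --- is genuinely different from the paper's proof and is in principle viable. (The paper never isolates a remainder needing boundary analysis: it expands the whole correction $e^{\psi(s)\pi z}$ as a power series and recognizes \emph{every} term as $\left[\Phi^{\mathrm{H}}_{\tau,0}(\psi^n u_\pi)\right](\pi-z)$ plus an elementary integral, then sums the series using Corollary \ref{Cor7}; your splitting $m=1+(m-1)$ is a linearized version of this.) However, the two estimates carrying the analytic weight of your argument are false as stated. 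First, $m(z,x)-1=O(x)$ does \emph{not} hold uniformly on $\overline{D}$: the prefactor ratio satisfies
$$\frac{z\sqrt{1-4x^2}-\pi}{z-\pi}=1+\frac{z\left(\sqrt{1-4x^2}-1\right)}{z-\pi},$$
and the second summand has size of order $x^2/|z-\pi|$, which blows up as $z\to\pi$ at fixed $x$ (take $z=\pi-x^3$); ``smooth in $x$ and equal to $1$ at $x=0$'' is not uniform in $z$ near the corner. Second, $\Re\left((z-\pi)^2\right)\asymp|z-\pi|^2$ fails on $D$: writing $w=\pi-z$, which ranges over the sector $|\arg w|<\pi/4$, one has $\Re\left(w^2\right)=|w|^2\cos(2\arg w)$, and the constant degenerates along the two \emph{sides} of the square. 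The sides, not the corner, are the true obstruction to Hilbert--Schmidtness: for the pure heat kernel, $\int_0^T|K_\pi(z,x)|^2dx\asymp |w|^2/\Re\left(w^2\right)^2$, which is non-integrable near the sides, so your claimed bound $|z-\pi|^2\log(1/|z-\pi|)$ for the remainder cannot follow from the stated inequalities.

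The gap is repairable, but only by splitting $m-1$ into two pieces with different behaviour and redoing the estimate with $\Re\left(w^2\right)$ rather than $|w|^2$ in the exponent. The exponential piece $e^{\pi z\psi}-1=O(x)$ \emph{is} uniform on $\overline{D}$, and for it
$$\int_0^T|K_\pi(z,x)|^2\,x^2\,dx\lesssim |w|^2\left(1+\log^+\frac{1}{\Re\left(w^2\right)}\right),$$
which is integrable over $D$ because the logarithm of the distance to the sides is locally integrable in the plane. The ratio piece is $O\left(x^2/|w|\right)$; its factor $1/|w|$ cancels the $|w|$ in $K_\pi$ and leaves $\int_0^T x\,e^{-\Re\left(w^2\right)/(2x)}dx\le T^2/2$, uniformly bounded. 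With this corrected decomposition your Hilbert--Schmidt conclusion, and hence the lemma, does follow; as written, however, the two key displayed claims are incorrect and the estimate they are meant to support is unproved.
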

\begin{proof}
Let us write the expression of  $\Phi^{\mathrm{H}}_{\tau, \, \pi}u_\pi$:
\begin{align*}
&\left[\Phi^{\mathrm{H}}_{\tau, \, \pi}u_\pi\right](z):=-2\int_{0}^\tau \frac{\partial K}{\partial y}(\tau - s, z , \pi) u_\pi(s) \, ds \\
&\quad=-2\int_0^\tau \left(-\coth(2(\tau-s)) \pi + \frac{z}{\sinh(2(\tau-s))}\right) \frac{e^{-\coth(2(\tau-s))\frac{z^2+\pi^2}{2} + \frac{\pi z}{\sinh(2(\tau-s))}}}{\sqrt{2 \pi\sinh(2(\tau-s))}} u_{\pi}(s)ds.\\
&\quad=-2\int_0^\tau \left(-\coth(2(\tau-s)) \pi + \frac{z}{\sinh(2(\tau-s))}\right) \frac{e^{-\frac{\coth(2(\tau-s))}{2}(\pi-z)^2 + \frac{1-\cosh(2(\tau-s))}{\sinh(2(\tau-s))}\pi z}}{\sqrt{2 \pi\sinh(2(\tau-s))}} u_{\pi}(s)ds.
\end{align*}

Note that the equality 
$(\Phi_{\tau, \, \pi}f) (z) = -(\Phi_{\tau, \, 0}f)(\pi-z)$
which holds for the classical heat equation is no longer true here. However, we can make  $\Phi^{\mathrm{H}}_{\tau, \, 0}$ appear in the above expression of $\Phi^{\mathrm{H}}_{\tau, \, \pi} u_\pi$.
For that, remark that 
\begin{equation}\label{Fct-psi}
\psi(s):= \frac{1-\cosh(2(\tau-s))}{\sinh(2(\tau-s))}\sim -(\tau-s),
\end{equation} 
and $\psi$ is bounded on $[0, \tau]$. 
Hence writing $e^{\psi(s)\pi z}= \sum_{n=0}^\infty \frac{(\psi(s)\pi z)^n}{n!}$, we obtain 
\begin{align*}
&\left[\Phi^{\mathrm{H}}_{\tau, \, \pi}u_\pi \right](z)\\
&=-2\int_0^\tau  \sum_{n=0}^\infty \frac{(\psi(s)\pi z)^n}{n!} \left(-\coth(2(\tau-s)) \pi + \frac{z}{\sinh(2(\tau-s))}\right) \frac{e^{-\frac{\coth(2(\tau-s))}{2}(\pi-z)^2}}{\sqrt{2 \pi\sinh(2(\tau-s))}} u_{\pi}(s)ds\\
&= -2 \sum_{n=0}^\infty \frac{(\pi z)^n}{n!}  \int_0^\tau (\psi(s))^n \left(-\coth(2(\tau-s)) \pi + \frac{z}{\sinh(2(\tau-s))}\right) \frac{e^{-\frac{\coth(2(\tau-s))}{2}(\pi-z)^2}}{\sqrt{2 \pi\sinh(2(\tau-s))}} u_{\pi}(s)ds.
\end{align*}
Now, since $-\coth(2(\tau-s) )\pi + \frac{z}{\sinh(2(\tau-s))} = \frac{z-\pi}{\sinh(2(\tau-s))} + \pi \psi(s)$, it follows 
\begin{align*}
&\left[\Phi^{\mathrm{H}}_{\tau, \, \pi}u_\pi \right](z)\\
&= \sum_{n=0}^\infty \frac{(\pi z)^n}{n!}  \frac{2}{\sqrt{2\pi}} \int_0^\tau (\psi(s))^n  \frac{\pi-z}{\sinh(2(\tau-s))^{\frac{3}{2}}} e^{-\frac{\coth(2(\tau-s))}{2}(\pi-z)^2} u_{\pi}(s)ds \\
&\quad - \sqrt{2 \pi} \sum_{n=0}^\infty \frac{(\pi z)^n}{n!}  \int_0^\tau (\psi(s))^{n+1}  \frac{e^{-\frac{\coth(2(\tau-s))}{2}(\pi-z)^2}}{\sqrt{\sinh(2(\tau-s))}} u_{\pi}(s)ds \\
&= \sum_{n=0}^\infty \frac{(\pi z)^n}{n!} \left[\Phi^{\mathrm{H}}_{\tau, \, 0}(\psi^{n} u_\pi)\right](\pi-z) \\
&\quad - \sqrt{2 \pi} \sum_{n=0}^\infty \frac{(\pi z)^n}{n!}  \int_0^\tau (\psi(s))^{n+1}  \frac{e^{-\frac{\coth(2(\tau-s))}{2}(\pi-z)^2}}{\sqrt{\sinh(2(\tau-s))}} u_{\pi}(s)ds\\
&:=\sum_{n=0}^\infty \frac{(\pi z)^n}{n!} \left(A(z) -  B(z)\right). 
\end{align*}
We treat these two terms separately. For the first term we 
use Corollary \ref{Cor7}
\begin{align*}
\|A\|_{A^2(D)} &=   \|\left[\Phi^{\mathrm{H}}_{\tau, \, 0}(\psi^{n} u_\pi)\right](\pi-\cdot)\|_{A^2(D)}\\
&= \|\Phi^{\mathrm{H}}_{\tau, \, 0}(\psi^{n} u_\pi)\|_{A^2(D)} \\
&\lesssim  \|\psi^{n} u_\pi \|_{L^2(0, \tau)}\\
&\leq \|u_\pi \|_{L^2(0, \tau)} \|\psi\|_{L^\infty(0, \infty)}^{n} 
\end{align*}
On the other hand, we have an integral which converges easily. Indeed, using the Cauchy-Schwarz inequality
$$
\|B\|^2_{A^2(D)} \lesssim \|u_\pi \|^2_{L^2(0, \tau)} \int_0^\tau \int_D |\psi(s)|^{2n+2} \left|\frac{e^{-\coth(2(\tau-s))(\pi-z)^2}}{\sinh(2(\tau-s))} \right| dA(z) ds.$$
So, using that $|e^{-\coth(2(\tau-s))(\pi-z)^2}| \leq 1$ on $D$, we obtain 
\begin{align*}
 \|B\|^2_{A^2(D)} 
&\lesssim  \|u_\pi \|^2_{L^2(0, \tau)} \int_0^\tau  |\psi(s)|^{2n+2} \left|\frac{1}{\sinh(2(\tau-s))} \right| ds \\
&\leq  \|u_\pi \|^2_{L^2(0, \tau)} \|\psi\|_{L^\infty(0, \infty)}^{2n+1}  \int_0^\tau  \psi(s) \frac{1}{\sinh(2(\tau-s))}ds< \infty,
\end{align*}
which in view of \eqref{Fct-psi} is bounded.
Finally, since $|z|<\pi$, we have 
\begin{align*}
\|\Phi^{\mathrm{H}}_{\tau, \, \pi}u_\pi \|_{A^2(D)} &\leq \sum_{n=0}^\infty \frac{\pi^{2n}}{n!}  \left(\|A\|_{A^2(D)} + \|B\|_{A^2(D)} \right)\\
&\lesssim\sum_{n=0}^\infty \frac{\pi^{2n}}{n!} \left( \|u_\pi \|_{L^2(0, \tau)} \|\psi\|_{L^\infty(0, \infty)}^{n} + \|\psi\|_{L^\infty(0, \infty)}^{n} \|u_\pi \|_{L^2(0, \tau)} \right)\\
&=  \|u_\pi \|_{L^2(0, \tau)} \sum_{n=0}^\infty \frac{(\|\psi\|_{L^\infty(0, \infty) }\pi^2)^n}{n!}. 
\end{align*}
Since the last sum converges (to $e^{\|\psi\|_{L^\infty(0, \infty)} \pi^2}$), the proof is complete. 
\end{proof}
This ends also the proof of Theorem \ref{thm-reach-segment-rightinclusion}
\end{proof}

The converse inclusion is more tricky if one wants to use the same kind of arguments as in the case of the heat equation. While $\Phi^{\mathrm{H}}_{\tau, \, 0}$ is still isomorphic, the situation for $\Phi^{\mathrm{H}}_{\tau, \, \pi}$ is more complicated and awaits further investigation. 

\begin{rmk}
\label{rmk}
Since the potential $x^2$ is even, one could want to take a symmetric interval $(-L, L)$ with respect to zero. This will be done in Section \ref{section-centered-interval}. In that setting, the symmetry $\left[\Phi^{\mathrm{H}}_{\tau, L}f\right](z)=-\left[\Phi^{\mathrm{H}}_{\tau, L}f\right](-z)$ holds but the difficulty of describing the ranges of these operators is comparable to the difficulty of describing the range of $\Phi^{\mathrm{H}}_{\tau, \pi}$.
\end{rmk}

\section{Reachable space for a centered interval \label{section-centered-interval}}\label{S5}
In the present section, we prove Proposition \ref{prop-reach-segment-centered}. 
%
%
%
%
%
%
%
%
%
%
%
%
%
%
%

As explained in Remark \ref{rmk}, the right inclusion can be proved exactly as Theorem \ref{thm-reach-segment-rightinclusion}. Indeed, the discussions in Section \ref{section-solution} applied to the symmetric interval $(-\pi,\pi)$ with
$$
 \varphi_{per}(x)=\sum_{k\in\Z}\varphi(x+4k\pi)-\varphi((4k+2)\pi-x)
$$
lead, as in Proposition \ref{prop-solHHE}, to
\begin{align*}
w(t,x)= 2 &\int_0^t \sum_{k \in \Z} \frac{\partial K}{\partial y}(t-s, x, (4k-1)\pi) u_{-\pi}(s) ds \\
&- 2 \int_0^t \sum_{k \in \Z} \frac{\partial K}{\partial y}(t-s, x, (4k+1)\pi) u_\pi(s) ds.
\end{align*}
Now, the terms for $k=0$ in the sums are treated in the same way as $\Phi^{\mathrm{H}}_{\tau, \pi}$ in Lemma \ref{lemma8}, and the remainders terms are again very regular. 
\\

Let us thus focus on the left inclusion. 
It is based on the following key lemma whose proof is an adaptation of that for the heat equation in \cite[Theorem 3]{SW2020}. Recall that $D_L$ is the square whose one diagonal is the
interval $(-L,L)$ as defined in \eqref{DL}.
\begin{lem}
\label{lemma-SW}
Let $w_0 \in C^\infty_0(\R)$ admit an analytic extension on $D_L$ which we will also denote
by $w_0$.
If $w$ is a solution of the Hermite heat equation 
\begin{equation}
\notag
\begin{cases}
\partial_t w  -\partial_x^2 w  +x^2 w = 0 , \quad t >0,\  x \in \R \\
w(0, x)=w_0(x), \quad x\in \R
\end{cases}
\end{equation} 
then $w(t, \cdot)$ converges uniformly on compact subsets of $D_L$ to $w_0$ when $t \to 0$ .
\end{lem}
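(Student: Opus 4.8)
The plan is to represent the solution on the whole line by convolution against the Mehler kernel, to continue this representation analytically in the space variable, and then to run a complex contour-deformation argument in the spirit of Strohmaier--Waters, the point of the adaptation being the $t$-dependent rescaling produced by the Mehler kernel. Since $w_0\in C_0^\infty(\R)$, the solution $w$ is represented by convolution against the fundamental solution \eqref{eq-Mehler-kernel}, namely $w(t,x)=\int_\R K(t,x,y)w_0(y)\,dy$; because $K$ is entire in its space variable and $w_0$ is integrable, this formula defines an entire function $z\mapsto w(t,z)$ for each $t>0$, and it is this continuation that must be compared with $w_0$ on $D_L$. First I would complete the square in $y$ in the exponent of $K$ to get
$$
K(t,z,y)=\frac{e^{-\frac{z^2}{2}\tanh(2t)}}{\sqrt{2\pi\sinh(2t)}}\,\exp\!\left(-\frac{\coth(2t)}{2}\Big(y-\tfrac{z}{\cosh(2t)}\Big)^2\right),
$$
so that, up to the harmless prefactors $e^{-z^2\tanh(2t)/2}\to1$ and $1/\sqrt{\cosh(2t)}\to1$, the kernel is a Gaussian of vanishing width concentrated at $\mu(t):=z/\cosh(2t)$, which tends to $z$ and, better still, lies strictly inside $D_L$ whenever $z$ does.

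The heart of the argument is to deform the contour of integration from $\R$ to a tent-shaped path $\Gamma$ adapted to $z=a+ib$ (say $b>0$; the cases $b<0$ and $b=0$ are symmetric or trivial). The path $\Gamma$ agrees with $\R$ outside a bounded interval, rises to a short horizontal segment at height $\Im\mu(t)=b/\cosh(2t)$ through $\mu(t)$, and is joined to the real axis by two straight edges of slope $\pm m$ with $0<m<1$. As $K(t,z,\cdot)$ is entire and $w_0$ is analytic on the convex set $D_L$, Cauchy's theorem gives $w(t,z)=\int_\Gamma K(t,z,y)w_0(y)\,dy$, provided the triangle enclosed by $\Gamma$ and $\R$ stays inside $D_L$. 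On the horizontal top one has $y-\mu(t)\in\R$, so the integral there is a standard approximate identity converging, uniformly in $z$, to $w_0(\mu(t))\to w_0(z)$ (the Gaussian mass and the two prefactors together tending to $1$). On the slanted edges and the real tails the quantity
$$
\Re\big[(y-\mu)^2\big]=(\Re y-\Re\mu)^2-(\Im y-\Im\mu)^2
$$
stays bounded below by a positive constant, so $\abs{K(t,z,y)}\lesssim t^{-1/2}e^{-c/t}\to0$ and these pieces are negligible, using also that $\abs{e^{-z^2\tanh(2t)/2}}$ is bounded on compacts.

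The main obstacle, and the reason a mere horizontal shift of the contour does not suffice, is a competition between two $45^\circ$ constraints: strict Gaussian decay along $\Gamma$ forces the slanted edges into the horizontal light-cone $\{\abs{\Re(y-\mu)}>\abs{\Im(y-\mu)}\}$ of $z$, hence to have modulus of slope $<1$, whereas staying in the analyticity domain forbids $\Gamma$ from crossing $\partial D_L$, whose sides have slope exactly $\pm1$. A purely horizontal deformation only settles the strictly smaller region $\abs a+2\abs b<L$. I would resolve this by exploiting that a compact $F\subset D_L$ sits in some $D_{L-\delta}$: for $z\in F$ one has $\abs a+\abs b\le L-\delta$, which leaves just enough room to pick a slope $m\in(0,1)$, bounded away from $1$ uniformly on $F$, for which both edges remain in the horizontal cone (uniform decay) and land on the real axis inside $(-L,L)$ with the enclosing triangle contained in $D_L$ (so the deformation is licit). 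A short computation shows the admissible range is $b/(L-\abs a)<m<1$, and the gap $\delta$ makes this interval nonempty with uniform constants. Collecting the three contributions then yields $w(t,z)\to w_0(z)$ uniformly on $F$, which is the assertion.
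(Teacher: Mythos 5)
Your proposal is correct and takes essentially the same route as the paper's proof: both represent the solution by the Mehler kernel, factor it as a harmless prefactor times a Gaussian concentrated at $z/\cosh(2t)$, deform the contour by Cauchy's theorem to a horizontal segment at height $\Im(z)/\cosh(2t)$ inside the analyticity domain, and conclude via an approximate identity on that segment plus Gaussian decay estimates on the remaining pieces, with the compactness margin ($F\subset D_{L-\delta}$, resp. $L_1<L_2<L$) supplying the uniform constants. The only differences are cosmetic: the paper first splits off the far field $\left|\xi\right|>L_2$ and then uses a trapezoid whose lateral sides are the slope-$\pm 1$ sides of $D_{L_2}$ (decay there coming from the offset $L_2-L_1$ rather than from a strict cone condition), whereas you keep the real tails in the contour and use tent edges of slope $m<1$.
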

\begin{proof}

We consider a compact of $D_L$ that we can assume to be of the form $\overline{D_{L_1}}$ with $0<L_1<L$. Let $0<L_1<L_2<L$.  
We have 
\begin{align*}
w(t,x)= \int_\R K(t, x, \xi) w_0(\xi) d\xi&= \int_{-L_2}^{L_2} K(t, x, \xi) w_0(\xi) d\xi + \int_{\R \setminus (-L_2, L_2)} K(t, x, \xi) w_0(\xi) d\xi \\
&:=w_1(t,x) + w_2(t,x).
\end{align*}
Let us start with $w_2$. In this case we have $|\xi|^2>L_2^2$. Recall also that 
$$K(t,z,\xi)=  \frac{1}{\sqrt{2\pi \sinh(2t)}} e^{-\mathrm{coth}(2t) \frac{z^2 +\xi^2}{2} + \frac{z\xi}{\sinh(2t)}}.$$
Provided that $z \in D_{L_1}$, and hence $\Re (z^2)>-L_1^2$, we have $\Re(z^2 + \xi^2)>0$. Hence, using that $\cosh \geq 1$ 
we obtain 
\begin{equation}
\label{eq-inequality}
 \left|\exp\left(-\mathrm{coth}(2t) \frac{z^2 +\xi^2}{2} + \frac{z\xi}{\sinh(2t)}\right)\right| \leq \exp\left(-\frac{1}{2\sinh(2t)}\Re((z-\xi)^2)\right).
\end{equation}
Now, since for every $z\in D_{L_1}$ we have $|\Im (z)|\le L_1-|\Re (z)|$, and
for every $\xi \in \R \setminus (-L_2, L_2) $ 
we have $||\Re (z)| - \xi|\ge L_2-|\Re (z)|$, we get
\begin{align*}
\mathrm{Re}((z-\xi)^2) = (\mathrm{Re}(z)-\xi)^2 - \Im(z)^2 &\ge  (|\mathrm{Re}(z)| - L_2)^2 - (L_1-|\mathrm{Re}(z)|)^2\\
&\ge (L_2-L_1)(L_1 + L_2 - 2 |\mathrm{Re}(z)|) \\
& \ge (L_2 - L_1)^2 >0 
\end{align*}
Hence, $z \mapsto w_2(t, z)$ extends holomorphically to $D_{L_1}$.
Moreover, we have for every $z \in D_{L_1}$ and every $t>0$
$$|w_2(t, z)| =  \frac{1}{\sqrt{2\pi \sinh(2t)}}  e^{-\frac{1}{2\sinh(2t)} (L_2-L_1)^2} \|w_0\|_{L^1(\R)}.$$
Hence, $t \mapsto w_2(t,z)$ converges to $0$ uniformly on $\overline{D_{L_1}}$ as $t$ tends to $0$. 

Before treating $w_1$, we remind some facts on the Mehler kernel (see e.g. \cite[Section 3]{Dhungana2006}). It decomposes as 
\begin{equation}\label{K-heat}
K(t,x, \xi)= \widetilde{\eta}(t,x) \widetilde{K}(t, x, \xi),
\end{equation} 
where 
\begin{align*}
&\widetilde{\eta}(t,x) = \frac{1}{\sqrt{\cosh(2t)}} e^{-\frac{\tanh(2t)}{2}x^2}, \\
&\widetilde{K}(t, x, \xi) = \sqrt{\frac{\coth(2t)}{2\pi}} e^{-\frac{\coth(2t)}{2} \left(\xi- \frac{1}{\cosh(2t)} x\right)^2}.
\end{align*}
Notice that $\widetilde{\eta}$ does not depend on $\xi$ and $\widetilde{\eta}$ goes uniformly to 
zero on compact sets for $x$ (even in $\C$) when $t\to 0$. Observe that $\widetilde{K}$ is a 
heat kernel in $\xi$ so that an immediate computation
(see also \cite[Lemma 3.1]{Dhungana2006}) yields
for every $x \in \R$:
\begin{equation}
\label{eq-intK1tilde}
\int_\R \widetilde{K}(t, x, \xi) d\xi = 1
\end{equation}
and for every $\delta >0$
\begin{equation}
\label{eq-unityapprox}
\int_{\left|\xi- \frac{1}{\cosh(2t)} x\right| \ge \delta} \widetilde{K}(t, x, \xi) d\xi \to 0 \text{ as } t \to 0 \text{ uniformly for } x \in \R
\end{equation}

We turn back to the proof. For $w_1$, the expression of $\widetilde{K}$ leads to an application the Cauchy formula on the trapezoid delimited by the segment $[-L_2, L_2]$, the line of equation $y=\frac{\Im(z)}{\cosh(2t)}$ (notice that $\left|\frac{\Im(z)}{\cosh(2t)}\right| \le |\Im(z)|$) and the sides of $D_{L_2}$ (see Figure \ref{fig-trapezoid}).
\begin{figure}[h]
\begin{center}
\begin{tikzpicture}[scale=0.8]
\draw[->,color=black] (-4.786476139740749,0.) -- (5.826614872316539,0.);
\foreach \x in {-4.,-3.,-2.,-1.,1.,2.,3.,4.,5.}
\draw[shift={(\x,0)},color=black] (0pt,-2pt);
\draw[->,color=black] (0.,-4.08546540460618) -- (0.,4.496499568936687);
\draw [line width=1.pt,color=black] (4.,0.)-- (0.,4.);
\draw [line width=1.pt,color=black] (0.,4.)-- (-4.,0.);
\draw [line width=1.pt,color=black] (-4.,0.)-- (0.,-4.);
\draw [line width=1.pt,color=black] (0.,-4.)-- (4.,0.);
\draw [line width=1.pt,color=black] (3.,0.)-- (0.,3.);
\draw [line width=1.pt,color=black] (0.,3.)-- (-3.,0.);
\draw [line width=1.pt,color=black] (-3.,0.)-- (0.,-3.);
\draw [line width=1.pt,color=black] (0.,-3.)-- (3.,0.);
\draw [line width=2pt,color=red] (-4.,0.)-- (4.,0.)node[midway,sloped,xscale=1, yscale=1, line width=5pt]{$>$};
\draw [line width=2pt,color=red] (4.,0.)-- (2.48,1.52)node[midway,sloped,xscale=1, yscale=1]{$<$};
\draw [line width=2pt,color=red] (2.48,1.52)-- (-2.48,1.52)node[midway,sloped,xscale=1, yscale=1]{$<$};
\draw [line width=2pt,color=red] (-2.48,1.52)-- (-4.,0.)node[midway,sloped,xscale=1, yscale=1]{$<$};
\draw (-0.5,-0.1) node[anchor=north west] {\footnotesize{0}};
\draw (3.9967715943756272,0) node[anchor=north west] {\footnotesize{$L_2$}};
\draw (2.8,0) node[anchor=north west] {\footnotesize{$L_1$}};
\draw (-3.6336748746379746,0) node[anchor=north west] {\footnotesize{$-L_1$}};
\draw (-4.9,0) node[anchor=north west] {\footnotesize{$-L_2$}};
\draw [line width=1.pt,dash pattern=on 4pt off 4pt,domain=-4.786476139740749:5.826614872316539] plot(\x,{(--7.5296-0.01*\x)/4.97});
\draw (6,2) node[anchor=north west] {\footnotesize{$y=\frac{\Im(z)}{\cosh(2t)}$}};
\draw (0.20899600903793997,2.282389202628186) node[anchor=north west] {\footnotesize{$z$}};
\draw [line width=1.pt,dash pattern=on 4pt off 4pt] (0.66,1.8)-- (0.66,-0.01);
\draw (0.3004881729349856,0.02) node[anchor=north west] {\footnotesize{$\mathrm{Re}(z)$}};
\begin{scriptsize}
\draw [fill=blue] (0.66,1.8) circle (2pt);
\end{scriptsize}
\end{tikzpicture}
\end{center}
\caption{\label{fig-trapezoid} In red the integration path.}
\end{figure}
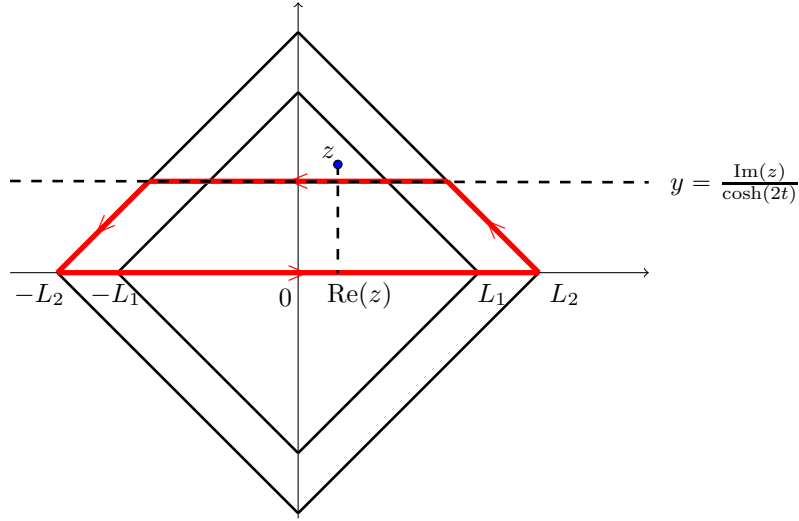

We obtain 
$$w_1(t,z)= \int_{-L_2 + \frac{\Im(z)}{\cosh(2t)}}^{L_2 - \frac{\Im(z)}{\cosh(2t)}}K\left(t,z, \xi + i \frac{\Im(z)}{\cosh(2t)}\right) \, w_0\left(\xi + i \frac{\Im(z)}{\cosh(2t)}\right) d\xi + \int_{\gamma_{z, t}} K(t,z, \omega) \, w_0(\omega) d\omega$$
where $\gamma_{z,t}$ is the path formed by the lateral sides of the trapezoid (oriented clockwise). Let us show that the second integral converges to $0$ as $t$ tends to $0$ uniformly on $\overline{D_{L_1}}$. Indeed, using again inequality \eqref{eq-inequality} 
for $z \in D_{L_1}$, we can write
$$ \left|\int_{\gamma_{z,t}} K(t,z, \omega) \, w_0(\omega) d\omega\right| \le \frac{2 \sqrt{2} L_1}{\sqrt{2\pi \sinh(2t)}}  e^{-\frac{1}{2\sinh(2t)} (L_1-L_2)^2} \sup_{\omega \in \overline{D_{L_2}}}|w_0(\omega)|,$$
where $2\sqrt{2}L_1$ is the maximal possible length of $\gamma_{z,t}$.
Finally, we will prove that 
$$t\mapsto w_3(t,z):= \int_{-L_2 + \frac{\Im(z)}{\cosh(2t)}}^{L_2 - \frac{\Im(z)}{\cosh(2t)}}K\left(t,z, \xi + i \frac{\mathrm{Im}(z)}{\cosh(2t)}\right) \, w_0\left(\xi + i \frac{\mathrm{Im}(z)}{\cosh(2t)}\right) d\xi$$
converges to $w_0$ as $t$ tends to $0$ uniformly on $\overline{D_{L_1}}$. We will adapt the proof of \cite[Lemma 3.2]{Dhungana2006} to our complex variable case. Let $\varepsilon >0$. There exists $\delta >0$ such that $L_1 + {\delta}\le L_2$, and  by uniform continuity of $w_0$ on $D_L$, 
\begin{equation}
\label{eq-uniformcontinuity}
\sup_{\substack{z \in \overline{D_{L_1}}\\  \zeta \in \C, \, |z-\zeta| < \delta}} |w_0(z)-w_0(\zeta)|< \varepsilon. 
\end{equation} 
Moreover, note that $\widetilde{K}\left(t,z, \xi + i \frac{\mathrm{Im}(z)}{\cosh(2t)}\right) = \widetilde{K}\left(t,\mathrm{Re}(z), \xi \right)$. 
Hence, writing 
$$w_{0, z, t}(\xi)= w_0\left(\xi + i \frac{\mathrm{Im}(z)}{\cosh(2t)}\right) \mathds{1}_{ \left\lbrace|\xi| + \frac{\left|\mathrm{Im}(z) \right|}{\cosh(2t)}\le L_2 \right\rbrace },  $$ 
and using \eqref{eq-intK1tilde}, we obtain for every $z \in D_{L_1}$  
\begin{align*}
|w_3&(t,z) - w_0(z)| \\
&= \left| \int_{\R}K\left(t,z, \xi + i \frac{\mathrm{Im}(z)}{\cosh(2t)}\right) \, w_{0, z, t}(\xi) d\xi - w_0(z)\right|\\
&= \left|\widetilde{\eta}(t,z) \left(\int_\R  \widetilde{K}\left(t, \mathrm{Re}(z), \xi\right) w_{0, z, t}(\xi) d\xi - w_0(z) \right) + (\widetilde{\eta}(t,z)-1) w_0(z) \right|
\end{align*}
So, by \eqref{eq-intK1tilde} and triangular inequality, we get 
\begin{align*}
|w_3&(t,z) - w_0(z)| \\
&\le |\widetilde{\eta}(t,z)| \int_\R |w_{0, z, t}(\xi) - w_0(z)| \widetilde{K}\left(t, \mathrm{Re}(z), \xi\right) d\xi + |\widetilde{\eta}(t,z)-1| |w_0(z)| \\
& \le |\widetilde{\eta}(t,z)| \sup_{\left|\xi- \frac{\mathrm{Re}(z)}{\cosh(2t)} \right| < \frac{\delta}{2}} \left|w_0\left(\xi + i \frac{\mathrm{Im}(z)}{\cosh(2t)}\right) - w_0(z)\right| \int_{\left|\xi- \frac{\mathrm{Re}(z)}{\cosh(2t)} \right| < \frac{\delta}{2}}  \widetilde{K}(t, \mathrm{Re}(z), \xi) d\xi \\
&\quad + 2 |\widetilde{\eta}(t,z)| \sup_{\zeta \in \overline{D_{L_2}}} |w_0(\zeta)| \int_{\left|\xi- \frac{\mathrm{Re}(z)}{\cosh(2t)} \right| \ge \frac{\delta}{2}}  \widetilde{K}(t, \mathrm{Re}(z), \xi) d\xi 
+ |\widetilde{\eta}(t,z)-1| |w_0(z)|\\
&:= I_1 + I_2 + I_3. 
\end{align*}
Consider $I_1$.
For $t>0$ satisfying $\left(1-\frac{1}{\cosh(2t)}\right)L_1 < \frac{\delta}{4}$, we have 
\begin{align*}
 |\xi+i\frac{\Im (z)}{\cosh(2t)}-z|
 &=|\xi+i\frac{\Im(z)}{\cosh(2t)}-\left[\Re(z)\left(\frac{1}{\cosh(2t)}-\left(\frac{1}{\cosh(2t)}-1\right)\right)+i\Im z|\right]\\
& \le |\xi-\frac{\Re (z)}{\cosh(2t)}|+(|\Re(z)|+|\Im(z)|) \left(1-\frac{1}{\cosh(2t)}\right)\\
&< \frac{\delta}{2}+2\times \frac{\delta}{4}=\delta. 
\end{align*}

Hence, $I_1 \le  C\varepsilon $  by \eqref{eq-uniformcontinuity}, \eqref{eq-intK1tilde}, and $C= e^{L_1^2 \tanh(2t)/2}/\sqrt{\cosh(2t)}$ uniformly bounds $\widetilde{\eta}$ (observe that $C$ goes to 1 when $t\to 0$).

Next, using \eqref{eq-unityapprox}, we deduce that $I_2$ converges to $0$ uniformly on $D_{L_1}$ as $t$ tends to $0$.  Since $\widetilde{\eta}(t,z) \to 1$ uniformly as $t \to 0$ and $w_0$ is bounded on $\overline{D_{L_1}}$, $I_3$ also converges to $0$ as $t$ tends to $0$ uniformly on $\overline{D_{L_1}}$. The proof is complete. 
\end{proof}

We are now in a position to prove Proposition \ref{prop-reach-segment-centered}. 
The following argument is strongly inspired by Strohmaier-Water and exploits the invariance by rotation by $\pm\pi/2$ of the square $D_L$. We would also like to thank S. Ervedoza for
introducing us to this proof and for his version of the said proof. 

\begin{proof}[Proof of Proposition \ref{prop-reach-segment-centered}]
Let $g \in \Hol(D_{L'})$ and let $L<L''<L'$. 
We denote by $\eta \in C^\infty_0(-L', L')$ be a non negative cut-off function such that $\eta = 1$ on $(-L'', L'')$. 
Set
$$
 w_0(x)=
 \begin{cases}
 \eta(x)g(ix),\ x\in (-L', L'),\\
 0,\ x\in\R\setminus (-L', L').
 \end{cases}
$$
Observe that by rotation invariance with center $0$ and angle $\pm \pi/2$ of $D_{L’}$, the function $g(ix)$ is well defined for $x\in (-L', L')$ ($g$ is holomorphic on $D_{L'}\supset i (-L', L')$) and that $w_0=g(i\cdot)$ 
extends holomorphically on $D_{L''}$. 
We denote by $w$ the solution of the equation 
\begin{equation}\label{Eqn17}
\begin{cases}
\partial_t w  -\partial_x^2 w  +x^2 w = 0 , \quad t \in (0, \, \tau)\  x \in \R \\
w(0, x)=w_0(x), \quad x\in \R.
\end{cases}
\end{equation}
Then it is well-known (and it can be deduced from the Mehler kernel)
that $w$ and hence $\partial_t w$ 
 extend to entire functions in the space variable so that replacing $x^2$ by $z^2$ on the left hand side in the first equation of \eqref{Eqn17} yields an entire function vanishing on $\R$. Consequently, by the identity theorem for holomorphic functions and Lemma \ref{lemma-SW}, $w$ is also solution of  
\begin{equation}
\label{eq-extensionw}
\begin{cases}
\partial_t w (t,z)  -\partial_x^2 w(t,z)  +z^2 w(t,z) = 0 , \quad t>0,\  z \in \C \\
w(0, z)=w_0(z)=g(iz), \quad z \in \overline{D_{L}},
\end{cases}
\end{equation}
where, in the second line, we have used the uniform convergence of the solution $w(t,z)$ on compact sets as
$t$ tends to $0$.
Set $v(t,z):=w(t,-iz)$ for all $t >0$ and $z \in \C$. Then we have $\partial_t v (t,z)= \partial_t w (t,-iz)$ and $\partial^2_x v (t,z)=-\partial_x^2 w (t,-iz)$ for every $t>0$ and $z \in \C$. So,
$$\partial_t v (t,z) + \partial^2_x v (t,z) = \partial_t w (t,-iz) - \partial_x^2 w (t,-iz)= -(-iz)^2 w(t,-iz)= z^2 v(t,z).$$
Therefore, in view of \eqref{eq-extensionw}, $v$ satisfies the backward Hermite heat equation
\begin{equation}
\begin{cases}
\partial_t v(t,z)  +\partial_x^2 v(t,z) - z^2 v(t,z) = 0 , \quad t \in (0, \, \tau),\  z \in \C \\
v(0, z)=g(z), \quad z\in \overline{D_{L}}.
\end{cases}
\end{equation}
Using the change of variable $t \to \tau-t$, $\widetilde{v}(t,z)=v(\tau-t,z)$, and turning back to the real variable, this leads to
\begin{equation}
\begin{cases}
\partial_t \tilde{v} (t,x)  -\partial_x^2 \tilde{v}(t,x) + x^2 \tilde{v}(t,x) = 0 , \quad t \in (0, \, \tau),\  x \in I_L \\
\tilde{v}(\tau, x)=g(x), \quad x \in I_L.
\end{cases}
\end{equation}
Finally, $u_{-L}(t):=\tilde{v}(t, -L)= w(\tau- \cdot , iL)$ and $u_L(t):=\tilde{v}(t, L)=w(\tau-t, -iL)$ yield the required boundary $L^2(0, \tau)$-control functions. 
Thus, $g \in \Phi^{\mathrm{H},}_{\tau, (-L, L)}$ and the proof is complete.
\end{proof}

\section{Proof of Theorem \ref{thm-reachable-segment}}\label{S6}

We start with a lemma which is of interest in itself. It translates the following intuitive phenomenon: a solution of the heat equation on a domain $\Omega$ is also solution on each subdomain $O$. Therefore, if the restriction of the solution to the boundary $\partial O$ of the subdomain belongs to $L^2([0,\tau]; L^2(\partial O))$, then the reachable space on $\Omega$ is included in the reachable space on $O$.

Since we are interested in the one-dimensional case, we state the lemma for intervals.

 \begin{lem}
 \label{lemma-reachable-subset}
Let $L \ge \pi$. 
We have the following inclusions of reachable spaces: 
$$\Ran \Phi^{\mathrm{H}}_{\tau, (-L, L)} \subset \Ran \Phi^{\mathrm{H}}_{\tau}$$
and 
$$\Ran \Phi^{\mathrm{H}}_{\tau, 0} \subset \Ran \Phi^{\mathrm{H}}_{\tau}.$$
 \end{lem}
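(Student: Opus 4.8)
The key intuition is stated in the text preceding the lemma: a solution of the Hermite heat equation on the larger interval (or on $\R$ via the half-line operator $\Phi^{\mathrm{H}}_{\tau,0}$) is \emph{a fortiori} a solution on the smaller interval $(0,\pi)$, provided its trace on the new boundary points $x=0$ and $x=\pi$ defines an admissible $L^2(0,\tau)$ boundary control. So the whole task reduces to producing, from a given reachable state on the big interval, the two scalar control functions at the endpoints $0$ and $\pi$ and checking they lie in $L^2(0,\tau)$. The plan is to treat the two inclusions separately but by the same mechanism.

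\textbf{First inclusion.} Given $g\in\Ran\Phi^{\mathrm{H}}_{\tau,(-L,L)}$, pick controls $u_{-L},u_L\in L^2(0,\tau)$ steering $0$ to $g$ on $(-L,L)$, and let $w$ be the corresponding solution. I would use the explicit representation of $w$ via the Mehler kernel $K$ — exactly the formula derived in Section~\ref{section-solution}, adapted to $(-L,L)$ as in the displayed formula just before this lemma — to read off $w(t,0)$ and $w(t,\pi)$. Because $L\ge\pi$, the points $0$ and $\pi$ are interior to $(-L,L)$, so in the kernel expression $\tfrac{\partial K}{\partial y}(t-s,x,\cdot)$ evaluated at $x\in\{0,\pi\}$ the source points never coincide with the evaluation point; this keeps the integrand nonsingular in $s$ for $s<t$ and, via the Gaussian decay built into $K$ (see \eqref{eq-Mehler-kernel}), yields finite $L^2(0,\tau)$ norms. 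I would then set $u_0(t):=w(t,0)$ and $u_\pi(t):=w(t,\pi)$. By uniqueness of the solution of \eqref{eq-Hermiteheat} (cited via \cite[Theorem 10.8.3]{TW2009}), the restriction $w|_{(0,\pi)}$ is the solution of the Hermite heat equation on $(0,\pi)$ with these boundary data and null initial condition, hence $g|_{(0,\pi)}=g$ lies in $\Ran\Phi^{\mathrm{H}}_\tau$.

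\textbf{Second inclusion.} The argument is the same but cleaner. For $g\in\Ran\Phi^{\mathrm{H}}_{\tau,0}$ arising from the half-line equation \eqref{eq-Hermiteheat-halfline}, the explicit formula \eqref{eq-solHHEhl-formula}, namely $\Phi^{\mathrm{H}}_{t,0}u_0(x)=2\int_0^t\frac{\partial K}{\partial y}(t-s,x,0)u_0(s)\,ds$, shows directly that $w(t,x)$ depends smoothly on $x$ at every interior point. Evaluating at $x=0$ and $x=\pi$ (both interior to $(0,\infty)$ with $0$ already being the half-line's control point) produces the two traces, whose $L^2(0,\tau)$-membership again follows from the Gaussian structure of $K$. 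One then restricts to $(0,\pi)$ and invokes uniqueness as above.

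\textbf{Main obstacle.} The genuine point to verify — rather than the heuristic — is the \emph{regularity claim}: that the trace of the solution at an interior point is an $L^2(0,\tau)$ function of $t$, so that it is an \emph{admissible} Dirichlet control. The commented-out passage in the excerpt flags exactly this (\emph{``this regularity condition is clear''}) and I would make it rigorous using the explicit Mehler-kernel representation: for fixed interior $x$, the integrand $\tfrac{\partial K}{\partial y}(t-s,x,y_0)$ is bounded for $s$ bounded away from $t$ and has only an integrable singularity as $s\to t$ (from the $\sinh(2(t-s))^{-3/2}$ factor tempered by the exponential, as already exploited in the proof of Theorem~\ref{thm-reach-segment-rightinclusion}), so the convolution with $u\in L^2$ produces a continuous, hence $L^2(0,\tau)$, trace. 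The subtlety is that for the full interval $(-L,L)$ one must sum the image series over $k\in\Z$, and I would control this sum by the same term-by-term Gaussian estimates used for the remainders $R_{\tau,0},R_{\tau,\pi}$ in the proof of Theorem~\ref{thm-reach-segment-rightinclusion}, which decay rapidly in $k$ and hence converge uniformly for interior $x$.
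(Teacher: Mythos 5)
Your proof is correct in its core mechanism, but it follows a genuinely different route from the paper's. The paper does not estimate the Mehler kernel at all at this stage: it first modifies the control, prepending a zero control on $[0,\tau]$ and shifting the original control to $[\tau,2\tau]$ (the trick of \cite[Proposition 3.2]{KNT2021}). By uniqueness the solution then vanishes identically on $[0,\tau)\times(-L,L)$, so parabolic smoothing gives $w\in C^\infty([0,2\tau]\times(-L,L))$ \emph{up to} $t=0$; the traces $w(\cdot,0)$, $w(\cdot,\pi)$ are therefore smooth, hence admissible $L^2$ controls, and one concludes $g\in\Ran\Phi^{\mathrm{H}}_{2\tau}=\Ran\Phi^{\mathrm{H}}_{\tau}$ using the time-invariance of the reachable space. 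Your approach instead proves the interior-trace regularity directly from the image-series representation of the solution: for $x_0$ at positive distance from all source points $(4k\pm1)L$, the factor $\sinh(2(t-s))^{-3/2}$ is completely absorbed by $\exp\bigl(-(x_0-y_k)^2/(2\sinh(2(t-s)))\bigr)$, so the kernel is bounded (not merely integrable) in $s$, the series converges, and the trace is continuous on $[0,\tau]$, hence in $L^2$; no time-extension and no appeal to time-invariance are needed. The paper itself acknowledges, in the remark following its proof, that this direct route works (interior trace regularity ``is always fulfilled'', citing \cite[Remark 2]{SW2020}). What each buys: the paper's argument is soft, avoids all kernel computations, and would extend to parabolic systems without explicit kernels; yours is quantitative, self-contained within one time window, and matches what the paper actually invokes for the second inclusion (which it dismisses as ``quite direct from the explicit form of the Mehler kernel'').

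One point needs repair. You assert that, because $L\ge\pi$, the points $0$ and $\pi$ are interior to $(-L,L)$ and ``the source points never coincide with the evaluation point''. This fails precisely when $L=\pi$: then $x=\pi$ is an endpoint of $(-\pi,\pi)$ and coincides with the $k=0$ source point $(4k+1)\pi$, so the Gaussian damping degenerates and your estimate does not apply there. This is not an exotic case — it is exactly the one used later in the paper, where Lemma \ref{lemma-reachable-subset} is applied to $\Ran\Phi^{\mathrm{H}}_{\tau,(-\pi,\pi)}$ in the proof of Theorem \ref{thm-reachable-segment}. The fix is one line: at a shared endpoint the Dirichlet datum of the restricted problem is by definition the original control ($u_\pi:=u_L\in L^2(0,\tau)$), so only the genuinely interior point $x=0$ requires the kernel estimate. (The same touch-up applies to your parenthetical in the half-line case, where $x=0$ is the control point, not an interior point, and the trace there is again the given control $u_0$ itself.) With that correction your argument is complete.
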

 
\begin{proof}
We only prove the first inclusion, the proof of the second inclusion is analogous (and quite direct from the explicit form of the Mehler kernel).
Let $g \in \Ran \Phi^{\mathrm{H},}_{\tau, (-L, L)}$. Then there exists a control $u:=(u_{-L}, u_L) \in L^2([0, \tau]; \, \C^2)$ such that the solution $w$ of the Hermite heat equation \eqref{eq-Hermiteheat2} satisfies $w(\tau, \, \cdot)=g$. The key point is that we can assume the control to be zero near $t=0$ as noticed in \cite[Proposition 3.2]{KNT2021}. Let us remind the argument. Define 
$$\widetilde{u}_{\pm L}(t):=\begin{cases}
0  &\text{ if } 0<t \le \tau,\\
u_{\pm L}(t-\tau) &\text{ if } \tau<t<2 \tau.
\end{cases}$$
Then the solution $w$ of the equation \eqref{eq-Hermiteheat2} with control $\widetilde{u}:=(\widetilde{u}_{-L}, \widetilde{u}_{L})$ reaches $g$ in time $2 \tau$. 

Now by uniqueness, $w$ is identically zero on $[0, \tau)\times (-L, L)$. Moreover, due to the smoothing effect it is well-known that $w \in C^\infty((0, 2\tau] \times (-L, L))$. Hence, $w$ belongs to $C^\infty([0, 2\tau] \times (-L, L))$ (focus on the regularity at $t=0$).
Finally, it is clear that $w$ satisfies equation \eqref{eq-Hermiteheat} with control $v_0=w(\cdot, 0)$ at $x=0$ and $v_\pi= w(\cdot, \pi)$ at $x=\pi$ which clearly belongs to $L^2(0, 2 \tau)$ by the discussion above. Hence, $g$ belongs to $\Ran \Phi^{\mathrm{H},}_{2 \tau}$ which does not depend on time and the proof is complete.
 \end{proof} 

In the previous proof, we have used the time invariance of the reachable space of the equation on  $(0, \pi)$ to obtain more regularity for the solution at $t=0$. Actually we only need that regularity for the restriction of the solution to an internal point of the interval which is always fulfilled as mentioned briefly in \cite[Remark 2]{SW2020}. 

We also want to notice that the above lemma is probably true for more general linear time invariant parabolic control systems which are null-controllable in any positive time. It turns out to be true for the classical heat equation since in this case we have an exact characterization of all these spaces.

Let us now start the proof of the theorem.


\begin{proof}[Proof of Theorem \ref{thm-reachable-segment}]
Let $\varepsilon >0$. 
Note that $D^{(\varepsilon)}$ is the intersection of the sectors $\Delta:=\enstq{z \in \C}{|\mathrm{arg}(z)| < \pi/4}$ and $\pi + \varepsilon - \Delta$. Hence, we can use a separation of singularities theorem for the Bergman space proved in \cite[Corollary 1.6]{HO2021} 
and which reads as follows: the equality 
$$A^2(D^{(\varepsilon)})=A^2(\Delta) + A^2(\pi + \varepsilon - \Delta)$$
holds. 
Therefore, using equality \eqref{eq-inclusion-heat-halfline}, we get 
\begin{align*}
A^2(D^{(\varepsilon)}) &=  \Ran(\Phi_{T, 0}) + X + A^2(\pi + \varepsilon - \Delta)\\
&\subset \Ran(\Phi_{T, 0}) + \Hol(D_{\pi+\varepsilon}).
\end{align*} 
Now, by Propositon \ref{prop-reach-segment-centered}, $\Hol(D_{\pi+\varepsilon})
\subset \Ran \Phi^H_{\tau,(-\pi,\pi)}$ and by Theorem \ref{thm-reach-half-line},
$\Ran \Phi_{T,0}=\Ran \Phi^{\mathrm{H}}_{\tau, \, 0}$.
%
We conclude the proof with Lemma \ref{lemma-reachable-subset}. 

%
\end{proof}

\subsection*{Acknowledgements}
We would like to thank Sylvain Ervedoza for having presented the Strohmaier-Waters proof to us.

\bibliographystyle{alpha} 
\bibliography{biblio} %
\end{document}